\newtheorem{theorem}{Theorem}[section]
\newtheorem{lemma}[theorem]{Lemma}
\newtheorem{proposition}[theorem]{Proposition}
\newtheorem{corollary}[theorem]{Corollary}
\theoremstyle{remark}
\newtheorem{remark}[theorem]{Remark}
\newtheorem*{claim*}{Claim}
\newtheorem{fact}{Fact}
\newcommand{\C}{\ensuremath{\mathbb{C}}}
\newcommand{\R}{\ensuremath{\mathbb{R}}}
\newcommand{\K}{\ensuremath{\mathbb{K}}}
\newcommand{\sph}{\ensuremath{\mathbb{S}}}
\newcommand{\g}[1]{\ensuremath{\mathfrak{#1}}}
\newcommand{\cal}[1]{\ensuremath{\mathcal{#1}}}
\DeclareMathOperator{\id}{Id}
\DeclareMathOperator{\Ad}{Ad}
\DeclareMathOperator{\Aut}{Aut}
\DeclareMathOperator{\Out}{Out}
\DeclareMathOperator{\spann}{span}
\DeclareMathOperator{\diag}{diag}
\renewcommand{\H}{\ensuremath{\mathbb{H}}}
\renewcommand{\mod}[1]{\ensuremath{\;(\mathrm{mod\;}#1)}}
\renewcommand{\P}{\ensuremath{\mathsf{P}}}
\newcommand{\GL}[1]{\ensuremath{\mathsf{GL}(#1)}}
\newcommand{\SU}[1]{\ensuremath{\mathsf{SU}(#1)}}
\newcommand{\SUxU}[2]{\ensuremath{\mathsf{S(U}(#1)\times\mathsf{U}(#2))}}
\newcommand{\U}[1]{\ensuremath{\mathsf{U}(#1)}}
\newcommand{\Sp}[1]{\ensuremath{\mathsf{Sp}(#1)}}
\newcommand{\SO}[1]{\ensuremath{\mathsf{SO}(#1)}}
\newcommand{\OG}[1]{\ensuremath{\mathsf{O}(#1)}}
\newcommand{\Spin}[1]{\ensuremath{\mathsf{Spin}(#1)}}
\newcommand{\G}{\ensuremath{\mathsf{G}_2}}
\newcommand{\E}[1]{\ensuremath{\mathsf{E}_{#1}
		}}
\begin{document}
\title[On isoparametric foliations of projective spaces]{On isoparametric foliations\\of complex and quaternionic projective spaces}
%    author one information
\author[M.~Dom\'{\i}nguez-V\'{a}zquez]{Miguel Dom\'{\i}nguez-V\'{a}zquez}
\address{CITMAga, 15782 Santiago de Compostela, Spain.\newline\indent Department of Mathematics, Universidade de Santiago de Compostela, Spain}
\email{miguel.dominguez@usc.es}
%    author two information
\author[A.~Kollross]{Andreas Kollross}
\address{Institut f\"ur Geometrie und Topologie, Universit\"at Stuttgart, Germany.}
\email{andreas.kollross@mathematik.uni-stuttgart.de}

\begin{abstract}
We conclude the classification of isoparametric (or equivalently, polar) foliations of complex and quaternionic projective spaces. This is done by investigating the projections of certain inhomogeneous isoparametric foliations of the $31$-sphere under the respective Hopf fibrations, thereby solving the last remaining open cases.
\end{abstract}

\thanks{The first author has been supported by the grant PID2022-138988NB-I00 funded by MICIU/AEI/10.13039/501100011033 and by ERDF, EU, and by the projects ED431F 2020/04 and ED431C 2023/31 (Xunta de Galicia, Spain).
}

\subjclass[2010]{Primary 53C12; Secondary 53C35, 57S15, 53C40.}
\keywords{Polar foliation, isoparametric foliation, FKM foliation, singular Riemannian foliation, homogeneous foliation, complex projective space, quaternionic projective space}
\maketitle
%%%%%%%%%%%%%%%%%%%%%%%%%%%%%%%%%%%%%%%%%%%%%%%%%%%%%%%%%%%%%%%%%%%%%%%%%%

\section{Introduction}
A \emph{polar foliation} $\mathcal{F}$ of a complete Riemannian manifold $M$ is a singular Riemannian foliation of $M$ such that each point of $M$ is contained in a totally geodesic submanifold of $M$ (called section) that intersects all leaves of $\mathcal{F}$ and always perpendicularly. Polar foliations whose regular leaves have parallel mean curvature are called \emph{isoparametric foliations}. A polar foliation of $M$ is called \emph{homogeneous} if it is the family of orbits of an isometric action on~$M$; such an isometric action is said to be \emph{polar}. Every homogeneous polar foliation turns out to be isoparametric.

General results on polar foliations of compact symmetric spaces have been derived by Lytchak~\cite{Lytchak:gafa}, and Liu and Radeschi~\cite{LiuRadeschi}. In particular, the latter proved that polar foliations of symmetric spaces of compact type are isoparametric. Also, a complete classification of homogeneous polar foliations is known on irreducible spaces~\cite{PT:jdg, Ko:jdg, KL:blms, GK:agag}. However, classification results of (not necessarily homogeneous) polar or isoparametric foliations are very scarce. Apart from the classical but far-reaching case of round spheres $\mathbb{S}^n$, to whose solution many authors have contributed, we only have classifications in complex projective spaces $\C \P^n$, $n\neq 15$, and quaternionic projective spaces $\H \P^n$, $n\neq 7$, obtained in~\cite{Do:tams} and~\cite{DG:tohoku}. We refer the reader to the recent survey~\cite{Th:survey22} for more information on polar foliations of symmetric spaces, and to~\cite{Chi} for the particularly important case of codimension one polar foliations (or equivalently, isoparametric families of hypersurfaces) of round spheres.

The aim of this article is to conclude the classification of polar foliations of complex and quaternionic projective spaces. To this end, it suffices to classify polar foliations of codimension one on the exceptional open cases $\C \P^{15}$ and $\H \P^7$ by the works by the first-named author and Gorodski~\cite{Do:tams,DG:tohoku}. But before stating our main result, we need to recall the approach followed in~\cite{Do:tams,DG:tohoku} and the main reason why this approach was insufficient to deal with the exceptional cases. We anticipate that this reason is not solely related to the classification problem on spheres.

It is a well-known fact that a singular Riemannian foliation $\cal{G}$ of $\C \P^n$ (resp.\ on $\H \P^n$) is polar if and only if its pullback foliation $\pi^{-1}\cal{G}$ under the Hopf fibration $\pi\colon \mathbb{S}^{2n+1}\to\C \P^n$ (resp.\ $\pi\colon \mathbb{S}^{4n+3}\to\H \P^n$) is a polar foliation of the round sphere $\mathbb{S}^{2n+1}$ (resp.\ $\mathbb{S}^{4n+3}$). In other words, the polar foliations of a projective space $\C \P^n$ or $\H \P^n$ are nothing but the projections (under the Hopf map) of the polar foliations of the corresponding sphere whose leaves are in turn foliated by the Hopf fibers. However, the following interesting fact arises: \emph{there are non-congruent polar foliations of $\C \P^n$ (or $\H \P^n$) with congruent pullback foliations of $\mathbb{S}^{2n+1}$ (resp.\ $\mathbb{S}^{4n+3}$)}. Roughly speaking, this is due to the fact that a projective space has an  isometry group that is `smaller' than that of the corresponding sphere. This is also the reason for the existence of \emph{inhomogeneous polar foliations of complex and quaternionic projective spaces whose pullback foliations are homogeneous}, and a fundamental element for the striking result stating that all irreducible polar foliations of $\C \P^n$ (or of $\H \P^n$) are homogeneous if and only if $n+1$ is prime, see~\cite{Do:tams,DG:tohoku}.

In order to investigate these remarkable phenomena, the idea is the following: for each polar foliation $\cal{F}$ of a round sphere, analyze all possible congruence classes of polar foliations $\cal{G}$ of $\C \P^n$ (or $\H \P^n$) such that $\pi^{-1}\cal{G}$ is congruent to $\cal{F}$. This
is equivalent to the following problem (see below for definitions):
\begin{enumerate}[\rm (P)]\label{problem}
\item For each polar foliation $\cal{F}$ of $\mathbb{S}^{2n+1}$ (resp.\ $\mathbb{S}^{4n+3}$), determine a maximal set of complex structures $\{J_i\}_{i\in I_\C}$ on $\R^{2n+2}$  (resp.\ of quaternionic structures $\{\g{q}_i\}_{i\in I_\H}$ on $\R^{4n+4}$) preserving $\cal{F}$ and such that the polar foliations $\pi_{J_i}(\cal{F})$, $i\in I_\C$, on $\C \P^n$ (resp.\  $\pi_{\g{q}_i}(\cal{F})$, $i\in I_\H$, on $\H \P^n$) are pairwise non-congruent.
\end{enumerate}
For convenience, we define $N_\C(\cal{F}):=|I_\C|$ or $N_\H(\cal{F}):=|I_\H|$ as the cardinals of the maximal sets mentioned above; a posteriori, these will be finite numbers. In Problem (P) and throughout this paper, by \emph{complex structure} on a Euclidean space $\R^{2n+2}$ we mean a linear isometry $J$ of $\R^{2n+2}$ such that $J^2=-\id$ (equivalently, $J\in\g{so}(2n+2)\cap\SO{2n+2}$). By \emph{quaternionic structure} on a Euclidean space $\R^{4n+4}$ we mean a $3$-dimensional Lie subalgebra $\g{q}\cong\g{su}(2)$ of $\g{so}(4n+4)\subset \mathrm{End}(\R^{4n+4})$ admitting a basis $J_1,J_2,J_3\in\g{q}$ such that $J_iJ_{i+1}=J_{i+2}$ (indices modulo $3$) and $J_i^2=-\id$, for $i=1,2,3$. Notice that a complex structure $J$ on $\R^{2n+2}$ determines a Hopf fibration $\pi_J\colon \mathbb{S}^{2n+1}\to\C \P^n$ from the unit sphere $\mathbb{S}^{2n+1}$ of $\R^{2n+2}$, given by the quotient map of the $\U{1}$-action of the connected subgroup of $\SO{2n+2}$ with Lie algebra $\R J$. Similarly, a quaternionic structure $\g{q}$ on $\R^{4n+4}$ determines a Hopf fibration $\pi_\g{q}\colon \mathbb{S}^{4n+3}\to\H \P^n$ from the unit sphere $\mathbb{S}^{4n+3}$ of $\R^{4n+4}$, induced by the $\SU{2}$-action of the connected subgroup of $\SO{4n+4}$ with Lie algebra $\g{q}$.
We say that a complex structure $J$ on $\R^{2n+2}$ \emph{preserves} a singular Riemannian foliation $\cal{F}$ of the unit sphere $\mathbb{S}^{2n+1}$ of $\R^{2n+2}$ if $\cal{F}$ admits the Hopf foliation $\{\pi_J^{-1}(p):p\in \C \P^n\}$ as a subfoliation. A completely analogous definition holds in the quaternionic setting, but in this case, each leaf of $\cal{F}$ has to be foliated by the $\mathbb{S}^3$-fibers of $\pi_\g{q}$ (instead of the $\mathbb{S}^1$-fibers of $\pi_J$).

The approach followed in~\cite{Do:tams,DG:tohoku} to address Problem~(P) relied, firstly, on the determination of the automorphism group
\[
\Aut(\cal{F})=\{A\in \mathsf{O}(r+1):A\text{ maps leaves of }\cal{F}\text{ to leaves of }\cal{F}\}
\]
of each polar foliation $\cal{F}$ of a round sphere $\mathbb{S}^r\subset\R^{r+1}$, and secondly, on the classification of some $\U{1}$ or $\SU{2}$ subgroups of $\Aut(\cal{F})$ up to certain equivalence relation. In~\cite{Do:tams}, the automorphism groups of homogeneous polar foliations of round spheres were calculated, as well as of most inhomogeneous, codimension one, polar foliations of FKM type (namely, the celebrated inhomogeneous isoparametric families of hypersurfaces  discovered by Ferus, Karcher and M\"unzner~\cite{FKM:mathz}). We recall that each one of the latter FKM foliations $\cal{F}_\cal{P}$ is determined by a space $\cal{P}=\spann\{P_0,\dots, P_m\}$ of symmetric $2l\times2l$-matrices such that $P_iP_j+P_jP_i=2\delta_{ij}\id$, for each $i=0,\dots, m$. Then, the corresponding isoparametric hypersurfaces in $\mathbb{S}^{2l-1}$ have $g=4$ constant principal curvatures with multiplicities $m_1=m_3=m$ and $m_2=m_4=l-m-1$. A word of caution is in order: with this notation (which agrees with the one in~\cite{FKM:mathz}), the multiplicities $m_1$, $m_2$ are not interchangeable, and encode different algebraic/geometric properties of the foliation (in particular, one cannot assume $m_1\leq m_2$ or $m_2\leq m_1$).

In~\cite{Do:tams}, the automorphism group $\Aut(\cal{F}_\cal{P})$ was calculated for all FKM foliations $\cal{F}_\cal{P}$ with $m_1\leq m_2$ (in the notation of~\cite{FKM:mathz} summarized above). The argument involved Clifford modules and pin/spin groups, and made use of a crucial property of most FKM foliations: if $m_1\leq m_2$, the FKM foliation determines the space $\cal{P}$ of symmetric matrices. There are eight FKM examples with $m_1>m_2$, but most of them are homogeneous or congruent to FKM examples with $m_1\leq m_2$. Thus, at the end, only two cases were left open: those with multiplicities $(m_1,m_2)=(8,7)$ in $\mathbb{S}^{31}$.

Curiously enough, when the papers~\cite{Do:tams,DG:tohoku} were written, isoparametric hypersurfaces in spheres with $g=4$ principal curvatures and multiplicities $7$, $8$ were not classified. This case was indeed the latest one to be successfully addressed in the context of the outstanding classification problem of isoparametric hypersurfaces in spheres. This was very recently achieved by Chi in~\cite{Chi:jdg}. His result states that an isoparametric family of hypersurfaces with $g=4$ principal curvatures and multiplicities $7$, $8$ in $\mathbb{S}^{31}$ must be one of the three FKM foliations with such properties: the two examples mentioned above with $(m_1,m_2)=(8,7)$, or one example with $(m_1,m_2)=(7,8)$ (which was actually discovered first by Ozeki and Takeuchi~\cite{OT}).

\medskip
In this article, we determine the automorphism group of the two exceptional FKM foliations with $(m_1,m_2)=(8,7)$ on $\mathbb{S}^{31}$ by a more direct approach than the one in~\cite{Do:tams}, namely by analyzing the lattice of closed subgroups of $\SO{32}$ that contain $\Spin{9}$ in light of some geometric features of the FKM foliations under consideration. Then we solve problem (P) above for each one these two FKM foliations $\cal{F}$.
Since the explicit description of the complex or quaternionic structures in problem (P) is a bit technical (we refer to \S\ref{subsec:8,7} for details), in the theorems below we content ourselves with indicating the maximal numbers $N_\C(\cal{F})$ and $N_\H(\cal{F})$ of complex or quaternionic structures in the conditions of problem (P).

As mentioned above,  each FKM foliation is determined by a space $\cal{P}=\spann\{P_0,\dots, P_m\}$ of symmetric matrices of the same order $2l$ such that $P_iP_j+P_jP_i=2\delta_{ij}\id$. Each such space $\cal{P}$ provides a Clifford module structure on $\R^{2l}$. We will denote by $k$ the number of its irreducible submodules, and by $\delta(m)$ the half of their common dimension (hence, $2l=2k\delta(m)$). If $m\not\equiv 0 \mod 4$, there is only one equivalence class of irreducible Clifford modules. If
$m\equiv 0 \mod 4$, there are exactly two equivalence classes of irreducible Clifford modules; in this case, let $k_+$ and $k_-$ be the number of each one of these two classes appearing in the decomposition into irreducible submodules (hence, $k=k_++k_-$). This implies that there is only one congruence class of FKM foliations with associated multiplicities $(m_1,m_2)$ when $m=m_1\not\equiv 0 \mod 4$. However, if $m=m_1\equiv 0 \mod 4$, two FKM foliations with associated multiplicities $(m_1,m_2)$ are congruent if and only if the associated sets $\{k_+,k_-\}$ coincide. Following~\cite[\S4.3]{FKM:mathz} we will denote by $\cal{F}_{(m_1,m_2)}$, $\cal{F}_{\underline{(m_1,m_2)}}$, $\cal{F}_{ \underline{\underline{(m_1,m_2)}}}\dots$ the different representatives of the congruence classes of FKM foliations with associated multiplicities $(m_1,m_2)$. Thus, for example, the representatives of the two congruence classes of FKM foliations with $(m_1,m_2)=(8,7)$ will be denoted by $\cal{F}_{(8,7)}$ (the one with $\{k_+,k_-\}=\{0,2\}$) and by $\cal{F}_{\underline{(8,7)}}$ (the one with $k_+=k_-=1$).

We can now state our main results. In the rest of this section, $\pi\colon \sph^{{a}(n+1)-1}\to\K\P^n$, with $\K\in\{\C,\H\}$ and ${a}=\dim_\R\K$, denotes a fixed Hopf fibration. We will use the letter $\cal{F}$ to refer to foliations of spheres, and $\cal{G}$ to foliations of projective spaces.

\begin{theorem}\label{th:87}
	Let $\cal{F}\in\{\cal{F}_{(8,7)},\cal{F}_{\underline{(8,7)}}\}$ be an FKM foliation with $(m_1,m_2)=(8,7)$ on~$\mathbb{S}^{31}$. Then, up to congruence, there are exactly $N_\C(\cal{F})$ (resp.\ $N_\H(\cal{F})$)  polar foliations $\cal{G}$ of $\C\P^{15}$ (resp.\ on $\H \P^7$) that pull back  under the Hopf fibration to a foliation $\pi^{-1}\cal{G}$ congruent to $\cal{F}$, where:
	\[
	\begin{array}{c@{\qquad\qquad\qquad}c}
	N_\C(\cal{F})=\begin{cases}2,& \text{if }\cal{F}=\cal{F}_{(8,7)},\\1,&  \text{if }\cal{F}=\cal{F}_{\underline{(8,7)}},
	\end{cases}
	&
		N_\H(\cal{F})=1.
	\end{array}
	\]
	All such polar foliations $\cal{G}$ of $\C\P^{15}$ and $\H\P^7$ are inhomogeneous.
\end{theorem}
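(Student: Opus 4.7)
The plan is the two-step strategy laid out in the introduction. First, compute $\Aut(\cal{F})\subset\OG{32}$ for each $\cal{F}\in\{\cal{F}_{(8,7)},\cal{F}_{\underline{(8,7)}}\}$; then enumerate, up to conjugation in $\Aut(\cal{F})$, the complex (resp.\ quaternionic) structures on $\R^{32}$ whose induced $\U{1}$ (resp.\ $\SU{2}$)-orbits are contained in the leaves of~$\cal{F}$. By the framework recalled in Problem~(P) and the paragraphs preceding it, this count yields exactly $N_\C(\cal{F})$ and $N_\H(\cal{F})$, since two such structures project to congruent foliations on $\C\P^{15}$ or $\H\P^{7}$ precisely when they are related by an element of $\Aut(\cal{F})$.

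For the first step, the key observation is that both FKM foliations in question admit an isometric $\Spin{9}$-action preserving~$\cal{F}$: the Clifford system $P_0,\dots,P_8$ endows $\R^{32}$ with a $\mathrm{Cl}_9$-module structure, and the induced spin action preserves the Cartan--M\"unzner polynomial. I would then enumerate the (short) lattice of closed subgroups of $\SO{32}$ containing $\Spin{9}$ and check, for each candidate, whether it in fact preserves~$\cal{F}$, exploiting the explicit $\Spin{9}$-geometry of the singular leaves to discard exotic candidates. The two foliations bifurcate here: as a $\Spin{9}$-module $\R^{32}\cong V\otimes\R^2$ with $V$ the $16$-dimensional spin representation, and for $\cal{F}_{(8,7)}$ (type $\{k_+,k_-\}=\{0,2\}$) the two copies of $V$ are equivalent, so the centralizer of $\Spin{9}$ in $\SO{32}$ is an $\SO{2}$-factor enlarging $\Aut(\cal{F})$; whereas for $\cal{F}_{\underline{(8,7)}}$ (type $\{k_+,k_-\}=\{1,1\}$) the two summands are inequivalent as $\mathrm{Cl}_9$-modules and the centralizer collapses to a discrete group, forcing a strictly smaller $\Aut(\cal{F})$.

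Given the description of $\Aut(\cal{F})$, the second step reduces to a standard compact Lie group conjugacy computation. The extra $\SO{2}$-factor present in $\Aut(\cal{F}_{(8,7)})$ but absent in $\Aut(\cal{F}_{\underline{(8,7)}})$ produces one additional conjugacy class of admissible $\U{1}$-subgroups, accounting for the bifurcation $N_\C(\cal{F}_{(8,7)})=2$ versus $N_\C(\cal{F}_{\underline{(8,7)}})=1$; in both cases the rigidity imposed by $\mathrm{Cl}_9$-compatibility on quaternionic triples leaves only one conjugacy class of $\SU{2}$-subgroups, giving $N_\H(\cal{F})=1$. The final inhomogeneity claim then follows from the standard fact that a homogeneous polar foliation on $\K\P^n$ pulls back under the Hopf fibration to a homogeneous one on the sphere (see~\cite{Do:tams, DG:tohoku}), combined with the known inhomogeneity of both $\cal{F}_{(8,7)}$ and $\cal{F}_{\underline{(8,7)}}$~\cite{FKM:mathz}. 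The main obstacle in this program is the computation of $\Aut(\cal{F})$: ruling out candidate automorphisms that do not normalize the canonical $\Spin{9}$ requires input beyond representation theory, and hinges on the detailed geometry of the singular leaves of the FKM foliation.
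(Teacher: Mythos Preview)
Your proposal is correct and follows essentially the same approach as the paper: determine $\Aut(\cal{F})^0$ by enumerating the lattice of connected subgroups of $\SO{32}$ containing the diagonal $\Spin{9}$ and eliminating candidates via cohomogeneity and the geometry of the focal leaves (Lemma~\ref{lemma:lattice} and Proposition~\ref{prop:2cases}), then carry out the conjugacy analysis of complex and quaternionic structures inside $\g{k}=\mathrm{Lie}(\Aut(\cal{F})^0)$ using the weight-theoretic framework of Facts~\ref{fact:starting}--\ref{fact:Out_lwd} and the lowest weight diagrams (Proposition~\ref{prop:out}). The only refinement worth noting is that the paper does not compute the full $\Aut(\cal{F})$ but only its identity component together with the single disconnected piece needed (the $\OG{2}$ in $\Aut(\cal{F}_{(8,7)})$, Remark~\ref{rem:O(2)}), which suffices because the relevant equivalence on $\cal{J}\cap\bar{C}$ is governed by $\Out_\cal{F}^\pm(\Delta_\g{k},\Delta_V)$.
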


As already mentioned, the results of~\cite{Do:tams} and~\cite{DG:tohoku} left open the classification problem of codimension one polar foliations of $\C\P^{15}$ and $\H\P^7$. Thanks to Theorem~\ref{th:87}, we can derive such a classification in the following corollary, where $\cal{G}^\mathrm{red}_{\K\P^k,\K\P^{n-k-1}}$ denotes the codimension one polar foliation of $\K\P^n$, $\K\in\{\C,\H\}$, whose singular leaves are totally geodesic $\K\P^k$ and $\K\P^{n-k-1}$ in $\K \P^n$, $k\in\{0,\dots, n-1\}$. We also recall that a polar foliation $\cal{G}$ of $\K\P^n$ is called irreducible if there is no totally geodesic $\K\P^k$ in $\K \P^n$, $k\in\{0,\dots, n-1\}$, that is foliated by leaves of $\cal{G}$. This is equivalent to the pullback~$\pi^{-1}\cal{G}$ being irreducible as a foliation of the corresponding sphere.

\begin{corollary}[Codimension one polar foliations of $\C\P^{15}$ and $\H\P^7$]\label{cor:15,7}
\hfill\\[-1em]

\begin{enumerate}[\rm (1)]
  \item Let $\cal{G}$ be a codimension one polar foliation  of $\C\P^{15}$. Then, $\cal{G}$ is congruent to a reducible foliation $\cal{G}^\mathrm{red}_{\C\P^k,\C\P^{\ell}}$ with $k+\ell=14$ or to one of the $N_\C(\cal{F})$ irreducible polar foliations of  $\C\P^{15}$  such that $\pi^{-1}\cal{G}$ is congruent to $\cal{F}$, for one of the FKM foliations $\cal{F}$ in Table~\ref{table:15,7}.

  \item Let $\cal{G}$ be a codimension one polar foliation  of $\H\P^7$. Then, $\cal{G}$ is congruent to a reducible foliation $\cal{G}^\mathrm{red}_{\H\P^k,\H\P^{\ell}}$ with $k+\ell=6$, or to one of the $N_\H(\cal{F})$ irreducible polar foliations of $\H\P^7$ such that $\pi^{-1}\cal{G}$ is congruent to $\cal{F}$, for one of the FKM foliations $\cal{F}$ in Table~\ref{table:15,7}.

\end{enumerate}
\end{corollary}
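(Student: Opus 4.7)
The plan is to lift $\mathcal{G}$ to a codimension one polar foliation $\mathcal{F}=\pi^{-1}\mathcal{G}$ of $\mathbb{S}^{31}$ via the Hopf fibration, classify the possible $\mathcal{F}$ using the (now complete) classification of isoparametric hypersurfaces of $\mathbb{S}^{31}$, and then match each admissible $\mathcal{F}$ with its possible downstairs projections using~\cite{Do:tams,DG:tohoku} together with our Theorem~\ref{th:87}. By the Liu--Radeschi theorem $\mathcal{F}$ is isoparametric, so by Münzner its number $g$ of distinct principal curvatures on regular leaves lies in $\{1,2,3,4,6\}$. The multiplicity constraints of Münzner and Abresch exclude $g\in\{3,6\}$ on $\mathbb{S}^{31}$, since these values force ambient dimensions $3m+1$ with $m\in\{1,2,4,8\}$ or $6m+1$ with $m\in\{1,2\}$, none equal to $31$. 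The case $g=1$ is also incompatible with being Hopf-saturated, because the two focal points of a $g=1$ family are never Hopf fibers.

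For $g=2$, $\mathcal{F}$ is a family of generalized Clifford tori associated with an orthogonal decomposition $\mathbb{R}^{a(n+1)}=V\oplus V^\perp$, and requiring its leaves to be saturated by Hopf fibers forces both $V$ and $V^\perp$ to be $\mathbb{K}$-invariant subspaces. The two singular leaves are then spheres of $V$ and $V^\perp$, which descend to totally geodesic $\mathbb{K}P^k$ and $\mathbb{K}P^{n-k-1}$ in $\mathbb{K}P^n$; hence $\mathcal{G}$ is congruent to exactly one of the reducible foliations $\mathcal{G}^{\mathrm{red}}_{\mathbb{K}P^k,\mathbb{K}P^{n-k-1}}$ appearing in the statement. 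Conversely, all such reducible foliations clearly arise this way, so this case contributes precisely the reducible part of the classification.

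For $g=4$, Chi's theorem~\cite{Chi:jdg} (together with Stolz's multiplicity restrictions and Cartan--Münzner theory in the other admissible multiplicity ranges) implies that $\mathcal{F}$ must be an FKM foliation. As recalled in the introduction, after discarding the FKM examples with $m_1>m_2$ that are either homogeneous or congruent to one with $m_1\leq m_2$, one is left with exactly the FKM foliations $\mathcal{F}$ listed in Table~\ref{table:15,7}. For every $\mathcal{F}$ in that table with $m_1\leq m_2$, the values $N_{\mathbb{K}}(\mathcal{F})$ and the explicit list of projections to $\mathbb{K}P^n$ are already provided by~\cite{Do:tams,DG:tohoku}; the two missing cases $\mathcal{F}_{(8,7)}$ and $\mathcal{F}_{\underline{(8,7)}}$ are precisely the ones supplied by Theorem~\ref{th:87}.

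The only remaining subtlety is ensuring that the reducible and FKM headings do not double-count any $\mathcal{G}$: this follows from the fact, recorded in the introduction, that $\mathcal{G}$ is irreducible on $\mathbb{K}P^n$ if and only if $\pi^{-1}\mathcal{G}$ is irreducible on the sphere, which distinguishes the $g=2$ (reducible) case from the FKM ($g=4$, irreducible) case. The conceptual core of the corollary is entirely outsourced to Chi's theorem and to Theorem~\ref{th:87}; once these are in place, the corollary itself is a straightforward bookkeeping merge of the lists from~\cite{Do:tams,DG:tohoku}, Chi's classification, and Theorem~\ref{th:87}, so no new obstacle is expected at this stage.
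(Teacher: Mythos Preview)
Your argument is correct and follows the same route as the paper (which derives this corollary by specializing Corollary~\ref{cor:codim1} to $\mathbb{S}^{31}$); your dimension count excluding $g\in\{3,6\}$ is a valid shortcut specific to $\mathbb{S}^{31}$, whereas the paper excludes these values via the general complex-structure arguments of \cite{Do:tams}. One bookkeeping slip: your case split covers FKM foliations with $m_1\le m_2$ via \cite{Do:tams,DG:tohoku} and the two $(8,7)$ cases via Theorem~\ref{th:87}, but $\cal{F}_{(9,6)}$ in Table~\ref{table:15,7} has $m_1>m_2$ and falls under neither clause --- it is homogeneous, so its $N_\K$ values are supplied by the \emph{homogeneous} analysis in \cite{Do:tams,DG:tohoku}, and your ``discarding'' sentence should be rephrased so as not to remove it from the table.
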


\begin{table}[h!]
\begin{tabular}{cccc||cc||cc}
	$\cal{F}$ & $m$ & $k$ or $k_\pm$ & $G/K$ if $\cal{F}$ homogeneous & $N_\C(\cal{F})$ & $N_\C^\mathrm{h}(\cal{F})$ & $N_\H(\cal{F})$ & $N_\H^\mathrm{h}(\cal{F})$\\
	\hline
	$\cal{F}_{(1,14)}$   & $1$ & $16$  & $\SO{18}/\SO{2}\times\SO{16}$ & $2$ & $1$ & $1$ & $0$
	\\
	$\cal{F}_{(2,13)}$ & $2$ & $8$ & $\SU{10}/\SUxU{2}{8}$ & $6$ & $1$ & $2$ & $1$
	\\
	$\cal{F}_{(3,12)}$ & $3$ & $4$ & Inhomogeneous & $2$ & $0$ & $2$ & $0$
	\\
	$\cal{F}_{(4,11)}$ & $4$ & $0,4$ & $\Sp{6}/\Sp{2}\times\Sp{4}$ &  $2$ & $0$ & $2$ & $0$
	\\
	$\cal{F}_{\underline{(4,11)}}$ & $4$ & $1,3$ & Inhomogeneous &  $2$ & $0$ & $2$ & $0$
	\\
	$\cal{F}_{\underline{\underline{(4,11)}}}$ & $4$ & $2,2$ & Inhomogeneous &  $2$ & $0$ & $2$ & $0$
	\\
	$\cal{F}_{(5,10)}$ & $5$ & $2$ & Inhomogeneous &  $2$ & $0$ & $2$ & $0$
	\\
		$\cal{F}_{(6,9)}$ & $6$ & $2$ & Inhomogeneous & $3$ & $0$ & $2$ & $0$
	\\
	$\cal{F}_{(7,8)}$ & $7$ & $2$ & Inhomogeneous & $2$ & $0$ & $1$ & $0$
	\\
	$\cal{F}_{(8,7)}$ & $8$ & $0,2$ & Inhomogeneous & $2$ & $0$ & $1$ & $0$
	\\
	$\cal{F}_{\underline{(8,7)}}$ & $8$ & $1,1$ & Inhomogeneous & $1$ & $0$ & $1$ & $0$
	\\
	$\cal{F}_{(9,6)}$ & $9$ & $1$ & $\E{6}/\Spin{10}\cdot\U{1}$ & $2$ & $1$ & $1$ & $0$
	\\ \hline
\end{tabular}
\smallskip
\caption{Irreducible codimension one polar foliations of $\C\P^{15}$ and $\H\P^7$ in terms of their pullback foliations $\cal{F}$ of $\sph^{31}$.}\label{table:15,7}
\end{table}

In Table~\ref{table:15,7}, all foliations $\cal{F}$ are of FKM type, and hence their regular leaves have $g=4$ distinct principal curvatures with multiplicity pair $(m_1,m_2)$ indicated as a subscript.
Some of the FKM foliations in Table~\ref{table:15,7} are homogeneous, and hence coincide with the orbit foliation (restricted to the unit sphere) of the isotropy representation of a symmetric space $G/K$ (of rank $2$). The number $N_\K^\mathrm{h}(\cal{F})$ of congruence classes of homogeneous polar foliations $\cal{G}$ with $\pi^{-1}\cal{G}$ congruent to $\cal{F}$ is also indicated in Table~\ref{table:15,7}.

For completeness, we include the following Corollary~\ref{cor:codim1} with the complete classification of codimension one polar foliations of complex and quaternionic projective spaces. The classification for codimension greater than one was obtained in~\cite{Do:tams} and~\cite{DG:tohoku}.

\begin{remark}
	Since any codimension one singular Riemannian foliation of a complete Riemannian manifold is polar (see~\cite[Theorem~5.24]{AB} or~\cite[\S2.4]{Lytchak:gd}), the following classes of singular Riemannian foliations of complex and quaternionic projective spaces (as well as of spheres) coincide: singular Riemannian foliations of codimension one, polar foliations of codimension one, isoparametric families of hypersurfaces. For the equivalence between polarity and isoparametricity, see Remark~\ref{rem:isop}.
\end{remark}

\begin{corollary}[Classification of codimension one polar foliations of $\C\P^n$ and $\H\P^n$]\label{cor:codim1}
	Let $\cal{G}$ be a codimension one polar foliation  on $\K\P^n$, $\K\in\{\C,\H\}$. Then, $\cal{G}$ is congruent to a reducible foliation $\cal{G}^\mathrm{red}_{\K\P^k,\K\P^{n-k-1}}$, $k\in\{0,\dots,n-1\}$, or to one of the $N_\K(\cal{F})$ irreducible polar foliations of $\K\P^n$ such that $\pi^{-1}\cal{G}$ is congruent to $\cal{F}$, for one of the foliations $\cal{F}$ listed in Tables~\ref{table:FKM} and~\ref{table:hom_c1}.
\end{corollary}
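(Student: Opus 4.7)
The plan is to combine three classifications---the two earlier ones of~\cite{Do:tams,DG:tohoku} and the one newly obtained in this paper---via the correspondence between polar foliations of a projective space and polar foliations of its Hopf sphere, so as to produce a single uniform statement.

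First, I would recall from the introduction that a codimension one singular Riemannian foliation $\cal{G}$ of $\K\P^n$ is polar if and only if its Hopf pullback $\pi^{-1}\cal{G}$ is a codimension one polar foliation of $\sph^{a(n+1)-1}$ admitting the Hopf fibers as a subfoliation, and that congruence classes of such $\cal{G}$ projecting from a fixed $\cal{F}$ are enumerated by $N_\K(\cal{F})$. Combined with the equivalence between codimension one polarity and isoparametricity noted in the remark preceding the corollary, this reduces the task to listing, for each $n$, the codimension one polar foliations $\cal{F}$ of the appropriate sphere with $N_\K(\cal{F})\geq 1$ together with the value of $N_\K(\cal{F})$.

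Next, I would split the codimension one polar foliations of the sphere into reducible and irreducible cases. The reducible ones compatible with some Hopf fibration correspond after projection to the foliations $\cal{G}^\mathrm{red}_{\K\P^k,\K\P^{n-k-1}}$ with $k\in\{0,\dots,n-1\}$: the orthogonal splitting of $\R^{a(n+1)}$ must be by $\K$-invariant subspaces, and any such splitting yields a unique projection. The irreducible ones are, by the classification of isoparametric hypersurfaces in spheres (now complete thanks to~\cite{Chi:jdg}), either orbit foliations of the isotropy representations of rank-two symmetric spaces or FKM foliations; the sublist of those admitting some $\K$-compatible structure (i.e.\ with $N_\K(\cal{F})\geq 1$) is precisely what is compiled in Tables~\ref{table:FKM} and~\ref{table:hom_c1}.

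Finally, I would dispatch the $n$-dependence: for $\C\P^n$ with $n\neq 15$ the values $N_\C(\cal{F})$ were computed in~\cite{Do:tams}; for $\H\P^n$ with $n\neq 7$ they were computed in~\cite{DG:tohoku}; and the exceptional dimensions $n=15$ and $n=7$ are handled by Corollary~\ref{cor:15,7} of this paper (itself a consequence of Theorem~\ref{th:87}), whose outcome is folded into Tables~\ref{table:FKM} and~\ref{table:hom_c1}. The substantive step was Theorem~\ref{th:87}; the corollary itself amounts to bookkeeping. The only care required is a consistency check across the three sources---that no foliation is double-counted and that the reducible/irreducible split agrees---but no additional calculation is involved, so I do not expect a real obstacle here.
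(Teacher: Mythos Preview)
Your approach is essentially the same as the paper's: pull back to the sphere, split into reducible and irreducible, and combine the sphere classification with the $N_\K$ values from \cite{Do:tams,DG:tohoku} and Theorem~\ref{th:87}. One caveat: in the paper's logical order Corollary~\ref{cor:15,7} is deduced \emph{from} Corollary~\ref{cor:codim1}, not the reverse, so you should invoke Theorem~\ref{th:87} directly for the $(8,7)$ cases; the paper also spells out explicitly why $g=3$ and $g=6$ with multiplicities $(2,2)$ are excluded (citing \cite[Theorem~5.1 and Proposition~2.3]{Do:tams}) and handles the reducible case via \cite[Proposition~4.1]{Do:tams} and \cite[Proposition~3.9]{DG:tohoku}, rather than folding these into a blanket reference to the tables.
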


\begin{table}[h!]
	\begin{tabular}{c||cc||ccc}
		$m\mod 8$ & $N_\C$ & $k$, $k_\pm$ & $N_\H$& $k$, $k_\pm$ \\
		\hline
		\multirow{2}{*}{$0$} &  $2$ & $k_\pm$ even & $2$ & $k_\pm\equiv 0 \mod 4$
		\\
		& $1$ & otherwise & $1$ & otherwise
		\\ \hline
		\multirow{2}{*}{$1$, $7$} & $2$ & $k$ even & $2$ & $k\equiv 0\mod 4$%, $m\neq 1$
		\\
		& $1$ & $k$ odd & $1$ & $k\not\equiv 0\mod 4$%, $m\neq 1$
	%	\\
	%	& && $0$ & $m=1$
		\\ \hline
		\multirow{2}{*}{$2$, $6$} & \multirow{2}{*}{$2+\left[\frac{k}{2}\right]$} & \multirow{2}{*}{any $k$} & $2$ & $k$ even
		\\
		& & & $1$ & $k$ odd
		\\ \hline
		$3$, $4$, $5$ & $2$ & any $k$, $k_\pm$ & $2$ & any $k$, $k_\pm$
		\\ \hline
	\end{tabular}
	%\smallskip
	\caption{Number of congruence classes of (a fortiori, inhomogeneous) polar foliations of $\C\P^{k\delta(m)-1}$ and $\H\P^{\frac{k\delta(m)}{2}-1}$ obtained as projections of \emph{inhomogeneous} FKM foliations $\cal{F}$.}\label{table:FKM}
\end{table}

\setlength{\tabcolsep}{0.65ex}
\newcommand\Tstrut{\rule{0pt}{1.5\normalbaselineskip}}
\newcommand\Bstrut{\rule[-\normalbaselineskip]{0pt}{1.5\normalbaselineskip}}
\begin{table}[h!]
	{\footnotesize
		\bigskip
	\begin{tabular}{cccc||cccc||cccc}
		$G/K$ & $g$ & $(m_1,m_2)$ & FKM? & $n_\C$ & $N_\C$ & Cond. & $N_\C^\mathrm{h}$& $n_\H$ & $N_\H$ & Condition & $N_\H^\mathrm{h}$ \\
		\hline
		\multirow{2}{*}{$\displaystyle\frac{\SO{k+2}}{\SO{2}\times\SO{k}}$} & \multirow{3}{*}{$4$} & \multirow{3}{*}{$(1,k-2)$} & \multirow{2}{*}{Yes}  & \multirow{3}{*}{$k-1$} & \multirow{2}{*}{$1$} & \multirow{2}{*}{$k$ odd} & \multirow{3}{*}{$1$} & \multirow{2}{*}{$\frac{k}{2}-1$} & $1$ & $k\equiv 0\mod 4$ & \multirow{2}{*}{$0$}
		\\
		& & & \multirow{2}{*}{$m=1$} & & \multirow{2}{*}{$2$} & \multirow{2}{*}{$k$ even} & & & $0$ & $k\equiv 2\mod 4$
		\\
		\multirow{1}{*}{$k\geq 3$} &&&&&&&&$-$& $-$ & $k$ odd & $-$
		\\ \hline \Tstrut
		$\displaystyle\frac{\SU{k+2}}{\SUxU{2}{k}}$  &
		\multirow{2}{*}{$4$} & \multirow{2}{*}{$(2, 2k-3)$} & Yes & \multirow{2}{*}{$2k-1$} & $2+[\frac{k}{2}]$ & $k\neq 2$ & \multirow{2}{*}{$1$} & \multirow{2}{*}{$k-1$} & $2$ & $k\neq 2$ even & \multirow{2}{*}{$1$}
		\\
		$k\geq 2$ &&& \multirow{0.5}{*}{$m=2$}&& $2$ & $k=2$ & & & $1$ & $k=2$ or odd
		\\ \hline
		\multirow{2}{*}{$\displaystyle\frac{\Sp{k+2}}{\Sp{2}\times\Sp{k}}$} & \multirow{3}{*}{$4$} & \multirow{3}{*}{$(4,4k-5)$} & Yes & \multirow{3}{*}{$4k-1$} & \multirow{2}{*}{$2$} & \multirow{2}{*}{$k\geq 3$} & \multirow{3}{*}{$0$} & \multirow{3}{*}{$2k-1$} & \multirow{2}{*}{$2$} & \multirow{2}{*}{$k\geq 3$} & \multirow{3}{*}{$0$}
		\\
		\multirow{3}{*}{$k\geq 2$}&&& $m=4$, &&  &  & &&  &   &
		\\
		&&& $0\in\{k_\pm\}$&& $1$  &$k=2$&&&$1$&$k=2$&
		\\ \hline
		\multirow{3}{*}{$\displaystyle\frac{\E{6}}{\Spin{10}\cdot \U{1}}$} & \multirow{3}{*}{$4$} & \multirow{3}{*}{$(9,6)$} & Yes & \multirow{3}{*}{$15$} & \multirow{3}{*}{$2$} & & \multirow{3}{*}{$1$} & \multirow{3}{*}{$7$} & \multirow{3}{*}{$1$} & & \multirow{3}{*}{$0$}
		\\
		&&&$m=9$, &&&&&&&
		\\
		&&&$k=1$ &&&&&&&
		\\ \hline
		\Tstrut $\displaystyle\frac{\SO{10}}{\U{5}}$ & $4$ & $(4,5)$ & No & $9$ & $2$ & &  $1$ & $4$ & $0$ & & $0$
			\Bstrut
		\\ \hline
		\Tstrut $\displaystyle\frac{\G}{\SO{4}}$ & $6$ & $(1,1)$ & No & $3$ & $1$ & & $0$ & $1$ & $1$ & & $1$
		\Bstrut
		\\
		\hline
	\end{tabular}}
	%\smallskip
	\caption{Number $N_\K(\cal{F})$ of congruence classes of polar foliations of $\K\P^{n_\K}$, $\K\in\{\C,\H\}$, obtained as projections of irreducible homogeneous codimension one foliations $\cal{F}$; exactly $N_\K^\mathrm{h}(\cal{F})$ of such classes are homogeneous. Each homogeneous foliation $\cal{F}$ is represented by the symmetric space $G/K$ whose isotropy representation, restricted to the unit sphere, yields $\cal{F}$.
	A dash ($-$) indicates the nonexistence of projections due to dimension reasons.
	}
	\label{table:hom_c1}
\end{table}

Some remarks about Tables~\ref{table:FKM} and~\ref{table:hom_c1} are in order:
\begin{itemize}
\item The sets of FKM foliations and of homogeneous polar foliations of spheres intersect in three infinite families and one additional example (these correspond to the first four rows of Table~\ref{table:hom_c1}, where we inform the values of $m$ and $k$, $k_\pm$ of the associated FKM foliation).
\item The values for $N_\K$ in Table~\ref{table:FKM} are only valid for the inhomogeneous FKM foliations~$\cal{F}$. Thus, the $N_\K$ of the homogeneous FKM foliations can only be read from Table~\ref{table:hom_c1}.
\item Recall that the two singular leaves of the foliations $\cal{G}$ of $\K\P^n$ obtained as projections of $\cal{F}$ have codimensions $m_1+1$, $m_2+1$, where $(m_1,m_2)$ are the multiplicities of the $g$ distinct principal curvatures of the regular leaves of $\cal{F}$.
\item The integer $g$ coincides with the number of points of a (closed) normal geodesic to $\cal{G}$ that lie in a singular leaf of $\cal{G}$. It follows from Corollary~\ref{cor:codim1} that $g=4$ for all codimension one polar foliations of $\K \P^n$, except for the reducible foliations $\cal{G}^\mathrm{red}_{\K\P^k,\K\P^{n-k-1}}$ ($g=2$), an inhomogeneous foliation of $\C\P^3$ ($g=6$) and a homogeneous foliation of $\H\P^1\cong\sph^{4}$ ($g=6$).
\item For the FKM foliations in Table~\ref{table:FKM}, $g=4$ and the multiplicity pair is $(m_1,m_2)=\bigl(m,k\delta(m)-m-1\bigr)$, always under the assumption that the second entry is positive.
\item If $\cal{G}\subset\K\P^n$ is homogeneous, then its pullback $\pi^{-1}\cal{G}$ must be homogeneous (whence the inhomogeneity of the projected foliations in Table~\ref{table:FKM}). However, a homogeneous polar foliation $\cal{F}$ of a sphere can give rise to inhomogeneous projections.
\item The number $N_\K^\mathrm{h}(\cal{F})$ of homogeneous polar foliations of $\K\P^n$ whose pullback is congruent to $\cal{F}$ is also indicated in Table~\ref{table:hom_c1}. This number is at most one, and this happens precisely when the associated symmetric space is Hermitian (for $\K=\C$) or quaternionic-K\"ahler (for $\K=\H$).
\end{itemize}

This paper is organized as follows. In Section~\ref{sec:aut} we investigate the automorphism group of the FKM foliations with $(m_1,m_2)=(8,7)$. In Section~\ref{sec:projections} we recall the approach proposed in~\cite{Do:tams} and~\cite{DG:tohoku} to analyze the projections of polar foliations of round spheres to complex and quaternionic projective spaces via the respective Hopf fibrations (in \S\ref{subsec:approach}), and we apply this approach, in combination with the results of Section~\ref{sec:aut}, to prove our main results (in~\S\ref{subsec:8,7}).

\section{The automorphism group of the FKM foliations with $(m_1,m_2)=(8,7)$}\label{sec:aut}
The aim of this section is to compute the connected component of the identity of the automorphism group of both FKM foliations with multiplicities $(m_1,m_2)=(8,7)$.

Recall from the introduction that an FKM foliation is determined by a vector space $\cal{P}=\spann\{P_0,\dots, P_m\}$ of symmetric matrices of order $2l$ satisfying the Clifford relations $P_iP_j+P_jP_i=2\delta_{ij}\id$.
Attached to $\cal{P}$ we have a Clifford algebra representation $\chi\colon\mathsf{Cl}^*_{m+1}\to \mathrm{End}(\R^{2l})$ so that $\cal{P}=\chi(\R^{m+1})$, where  $\mathsf{Cl}^*_{m+1}$ is the Clifford algebra associated with $\R^{m+1}$ (following the notation of~\cite[\S{}I.3]{LM}).

Let $\R^{2l}=\bigoplus_{r=1}^k \g{d}_r$ be the decomposition of $\R^{2l}$ into irreducible Clifford modules for the representation $\chi$. If $m\not\equiv 0\mod 4$, all irreducible submodules $\g{d}_r$ are equivalent, whereas if $m\equiv 0\mod 4$ each $\g{d}_r$ is equivalent to one of two possible irreducible modules, say $\g{d}_+$ and $\g{d}_-$; in the latter case, we let $k_\pm$ be the number of copies of $\g{d}_\pm$ in the above decomposition into irreducible submodules, respectively, and hence $k=k_++k_-$. In this paper, as in~\cite{FKM:mathz}, we let $2\delta(m)$ be the dimension of any irreducible $\mathsf{Cl}^*_{m+1}$-module. Therefore, we have $l=k\delta(m)$.

Provided that $k\delta(m)-m-1\geq 1$, the FKM foliation $\cal{F}$ of the unit sphere $\sph^{2l-1}$ of $\R^{2l}$ associated with $\cal{P}$ is given by the level sets of $F\vert_{\sph^{2l-1}}$, where $F\colon \R^{2l}\to\R$ is the Cartan-M\"unzner polynomial
\[
F(x)=\langle x, x\rangle^2-2\sum_{i=0}^m\langle P_i x, x\rangle^2.
\]
In this case, $\cal{F}$ is an isoparametric family of hypersurfaces with $g=4$ constant principal curvatures and multiplicity pair $(m_1,m_2)=(m,k\delta(m)-m-1)$. This isoparametric foliation of $\sph^{2l-1}$ has two focal leaves, $M_1$ and $M_2$, of respective codimensions $m_1+1$ and $m_2+1$ in $\sph^{2l-1}$.

Up to congruence, an FKM foliation is determined by the positive integers $m$ and $k$ if $m\not\equiv 0\mod 4$, or by the positive integer $m$ and the unordered pair $\{k_+,k_-\}$ if $m\equiv 0\mod 4$. In the first case, we denote by $\cal{F}_{(m_1,m_2)}$ any representative of the unique congruence class with $m=m_1$ and $k=(m_2+m+1)/\delta(m)$. In the second case, we denote by $\cal{F}_{(m_1,m_2)}$, $\cal{F}_{\underline{(m_1,m_2)}}$, $\cal{F}_{ \underline{\underline{(m_1,m_2)}}}\dots$ the representatives of the $[k/2]+1$ congruence classes with $m=m_1$ and $k=(m_2+m+1)/\delta(m)$, where the number of underlinings is $\min\{k_\pm\}$. Two FKM foliations are congruent if and only if they share the same multiplicity pair and the same value $\min\{k_\pm\}$, with the only following exceptions: $\cal{F}_{(2,1)}=\cal{F}_{(1,2)}$,
$\cal{F}_{\underline{(4,3)}}=\cal{F}_{(3,4)}$, $\cal{F}_{(5,2)}=\cal{F}_{(2,5)}$, $\cal{F}_{(6,1)}=\cal{F}_{(1,6)}$.
We refer to~\cite[\S4.3]{FKM:mathz} or~\cite[\S5.1]{Chi} for the table of possible  multiplicities with $m$ and $k$ small and additional information.
\medskip

We need to recall some known facts about certain subgroups of the automorphism group of FKM foliations. The connected Lie subgroup of $\SO{2l}$ with Lie algebra $\spann\{P_iP_j:i\neq j\}\cong\g{so}(m+1)$ turns out to be isomorphic to $\Spin{m+1}$. This Lie group $\Spin{m+1}$ is a subgroup of the automorphism group $\Aut(\cal{F})$ of the FKM foliation $\cal{F}$, cf.~\cite[\S6.2]{FKM:mathz}. The group
\[
\mathsf{U}^+(\cal{P})=\{A\in \OG{2l}:AP=PA\text{ for all } P\in\cal{P}\}
\]
is also a closed subgroup of $\Aut(\cal{F})$. Hence, the map $\Psi\colon \Spin{m+1}\times \mathsf{U}^+(\cal{P})\to\Aut(\cal{F})$, $\Psi(Q,A)= QA$, is a Lie group homomorphism with finite kernel (indeed, $\ker\Psi$ is a subgroup of the center of $\Spin{m+1}$). The group $\mathsf{U}^+(\cal{P})$ was computed in~\cite[Theorem~3.4]{Do:tams}, see also~\cite{Riehm}, for all FKM foliations.
In particular, for $m\equiv 0\mod 8$ we have $\mathsf{U}^+(\cal{P})\cong \OG{k_+}\times\OG{k_-}$.
In this case, the action of the subgroup $\Spin{m+1}\cdot\mathsf{U}^+(\cal{P})$ of $\Aut(\cal{F})$ on $\R^{2l}$ is given by the direct sum of two tensor product representations, namely $(\g{d}_+\otimes\R^{k_+})\oplus (\g{d}_-\otimes\R^{k_-})$, where $\Spin{m+1}$ acts on the left factors $\g{d}_\pm$ via the real spin representation and $\OG{k_\pm}$ acts on the corresponding right factor $\R^{k_\pm}$ in the standard way.

From now on, we will focus on the case we are interested in, namely on the FKM foliations $\cal{F}_{(8,7)}$ and $\cal{F}_{\underline{(8,7)}}$ of $\sph^{31}\subset\R^{32}$. According to the previous discussion, we know that $\Spin{9}\cdot \OG{2}$ is a subgroup of $\Aut(\cal{F}_{(8,7)})$, and $\Spin{9}\cdot(\OG{1}\times\OG{1})$ is a subgroup of $\Aut(\cal{F}_{\underline{(8,7)}})$. Taking connected components, we get the subgroups
\begin{equation}\label{eq:inclusions}
\Spin{9}\cdot \SO{2}\subset\Aut(\cal{F}_{(8,7)})^0\qquad \text{and}\qquad \Spin9\subset\Aut(\cal{F}_{\underline{(8,7)}})^0.
\end{equation}
The aim of the rest of this section is to show that these inclusions are actually equalities.

\begin{remark}\label{rem:O(2)}
	For our purposes, we will not need to determine the full automorphism groups of  $\cal{F}_{(8,7)}$ and $\cal{F}_{\underline{(8,7)}}$. However, apart from their connected components (Proposition~\ref{prop:2cases} below), we will need to use the above mentioned fact that  $\Spin{9}\cdot \OG{2}\subset \Aut(\cal{F}_{(8,7)})$.
\end{remark}

In the following lemma, we discuss the connected subgroups of $\SO{32}$ that contain a $\Spin{9}$ acting diagonally (componentwise) on $\R^{32}=\R^{16}\oplus\R^{16}$. Note that both $\Spin{9}$ appearing in~\eqref{eq:inclusions} are diagonal in this sense. The specific diagonal embedding will not play a role in our arguments, since all of them are conjugate in $\mathsf{O}(32)$, and hence their actions on $\R^{32}$ have the same cohomogeneities.

\begin{lemma}\label{lemma:lattice}
Let $H$ be a closed connected subgroup of~$\SO{32}$ containing a diagonal~$\Spin{9}$. Then $H$ is conjugate in $\mathsf{O}(32)$ to one of the subgroups given in Figure~\ref{T:SubgrSO32}, where the superscripts indicate the cohomogeneity of the $H$-action on~$\R^{32}$.
In particular, if $H$ acts on~$\R^{32}$ with cohomogeneity at least~$3$, we have:
\begin{enumerate}[\rm(i)]
\item $H$ does not properly contain~$\Spin{9}\cdot\SO{2}$.
\item If $H$ properly contains the diagonal~$\Spin{9}$, then $H$ is conjugate to a diagonal $\SO{16}$ or to \mbox{$\Spin{9}\cdot\SO{2}$}.
\end{enumerate}

\begin{figure}[h]
\begin{picture}(200,180)
\put(-75,90)
{\begin{tabular}{c@{\hspace{4ex}}l@{\hspace{4ex}}l@{\hspace{4ex}}c}
&   \multicolumn{2}{l}{$\SO{32}^1$} & \\
& & & \\
 $\SO{16}{\times}\SO{16}^2$ &  & $\U{16}^1$ & \\
& & & \\
$\Spin{9}{\times}\SO{16}^2$ & $\SU{16}^1$ & $\SO{16}\cdot\SO{2}^2$  & $\Spin{10}\cdot \SO{2}^2$ \\
& & & \\
$\Spin{9}{\times}\Spin{9}^2$ & $\SO{16}^3$ & $\Spin{9}\cdot\SO{2}^3$ & $\Spin{10}^2$\\
& & & \\
&\multicolumn{2}{l}{\hspace{6ex}$\Spin{9}^4$} & \\
& & & \\
\end{tabular}}
\put(60,164){\line(2,-1){36}} \put(55,164){\line(-5,-1){75}}
\put(124,130){\line(6,-1){110}}
\put(118,130){\line(0,-1){20}}\put(114,130){\line(-3,-1){60}}\put(-28,130){\line(0,-1){20}}
\put(4,130){\line(2,-3){37}}
\put(-28,94){\line(0,-1){20}}
\put(118,94){\line(0,-1){20}}
\put(48,94){\line(0,-1){20}}
\put(240,94){\line(0,-1){20}}
\put(114,94){\line(-3,-1){60}}
\put(230,94){\line(-5,-1){90}}
\put(-8,58){\line(3,-1){70}}
\put(114,58){\line(-1,-1){20}}
\put(58,58){\line(1,-1){20}}
\put(230,58){\line(-4,-1){115}}
\end{picture}
\caption{Lattice of connected subgroups
of~$\SO{32}$ containing a diagonal~$\Spin{9}$, up to conjugation in $\mathsf{O}(32)$. Superscripts denote the cohomogeneity of the action on $\R^{32}$.} \label{T:SubgrSO32}
\end{figure}
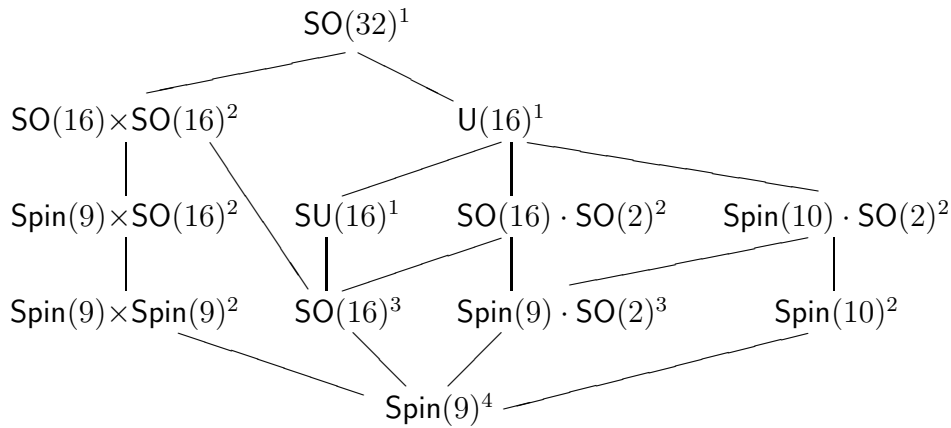
\end{lemma}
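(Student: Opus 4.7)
The plan is to work at the Lie algebra level and exploit the representation-theoretic structure of $\g{so}(32)$ as a $\Spin{9}$-module. Let $\g{h}\subset\g{so}(32)$ be the Lie algebra of $H$; since $H$ contains the diagonal $\Spin{9}$, the subspace $\g{h}$ is $\Ad(\Spin{9})$-invariant and contains the diagonal copy of $\g{spin}(9)$, so it is determined by a $\Spin{9}$-submodule complement to $\g{spin}(9)$ in $\g{so}(32)$ that together with $\g{spin}(9)$ is closed under the Lie bracket.

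First I would compute the centralizer of the diagonal $\Spin{9}$ in $\OG{32}$. Since the real spin representation of $\Spin{9}$ on $V=\R^{16}$ is irreducible of real type, Schur's lemma identifies the commutant of $\Spin{9}$ in $\mathrm{End}(V\oplus V)$ with $\mathrm{Mat}_2(\R)$; intersecting with $\OG{32}$ yields an $\OG{2}$-centralizer, so the connected normalizer of a diagonal $\Spin{9}$ in $\SO{32}$ is $\Spin{9}\cdot\SO{2}$. This pins down the commuting complex structure responsible for the $\U{16}$-branch of Figure~\ref{T:SubgrSO32}.

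Next I would decompose $\g{so}(32)$ as a $\Spin{9}$-module using the block splitting induced by $\R^{32}=V\oplus V$:
\[
\g{so}(32)=\g{so}(16)_1\oplus\g{so}(16)_2\oplus(V\otimes V),
\]
where the first two summands are the diagonal blocks and the third is the off-diagonal block. As a $\Spin{9}$-module, each $\g{so}(16)_i=\g{spin}(9)\oplus\g{m}$, with $\g{m}$ the $84$-dimensional irreducible isotropy representation of the isotropy irreducible space $\SO{16}/\Spin{9}$, and $V\otimes V=\mathrm{Sym}^2 V\oplus\wedge^2 V$ decomposes further into irreducibles via Clifford multiplication, yielding the exterior powers of the vector representation of $\Spin{9}$. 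With this decomposition in hand, I would enumerate the $\Ad(\Spin{9})$-invariant subspaces of $\g{so}(32)$ containing $\g{spin}(9)$ and check, using the Jacobi identity and the known bracket relations between these pieces, which are closed under the Lie bracket. Each resulting Lie subalgebra is then matched with an entry of Figure~\ref{T:SubgrSO32}: the diagonal $\g{so}(16)$ arises from combining the two $\g{m}_i$'s with the diagonal $\g{spin}(9)$; $\g{spin}(10)$ appears because the half-spin representation of $\Spin{10}$ restricts to the real spin representation of $\Spin{9}$, producing the chain $\Spin{9}\subset\Spin{10}\subset\SU{16}\subset\SO{32}$; and the $\g{so}(2)$ central factor accounts for the `unitary' branches of the diagram. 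Standard cohomogeneity computations---recognizing each representation as the isotropy representation of a known symmetric space, or via a principal-slice argument---yield the superscripts in Figure~\ref{T:SubgrSO32}, and claims (i) and (ii) follow by direct inspection of the diagram.

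The main obstacle will be the detailed bookkeeping for the off-diagonal block $V\otimes V$: determining precisely which $\Spin{9}$-invariant combinations of irreducible pieces from $V\otimes V$ and the $\g{m}_i$'s close up under the Lie bracket to produce a proper Lie subalgebra. The rigidity of the spin representation of $\Spin{9}$---its real type, the irreducibility of $\g{m}$, and the specific restriction from the half-spin representation of $\Spin{10}$---is what ultimately ensures the list in Figure~\ref{T:SubgrSO32} is exhaustive, with no unexpected intermediate subgroups.
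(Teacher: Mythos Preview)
Your approach is genuinely different from the paper's. The paper argues top-down using Dynkin's classification of maximal connected subgroups: it first identifies the maximal connected subgroups of $\SO{32}$ containing the diagonal $\Spin{9}$ (namely $\SO{16}\times\SO{16}$ and $\U{16}$), and then recursively descends through their maximal connected subgroups, at each stage discarding those that cannot contain a diagonal $\Spin{9}$. This yields the lattice in Figure~\ref{T:SubgrSO32} in a handful of short case distinctions, with the cohomogeneities read off from known polar and cohomogeneity-one classifications.

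Your bottom-up representation-theoretic plan is in principle viable, but there is a difficulty you have not confronted: the $\Spin{9}$-module $\g{so}(32)$ has nontrivial multiplicities. In your block decomposition, each $\g{so}(16)_i$ contributes a copy of $\Lambda^2\R^9\cong\g{spin}(9)$ and of $\Lambda^3\R^9\cong\g{m}$, and the off-diagonal $V\otimes V$ contributes one more copy of each (together with $\Lambda^0$, $\Lambda^1$, $\Lambda^4$). Thus the isotypic components of types $\Lambda^2$ and $\Lambda^3$ each have multiplicity three, so the $\Ad(\Spin{9})$-invariant subspaces containing the diagonal $\g{spin}(9)$ form continuous families rather than a finite list. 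Your phrase ``enumerate the $\Ad(\Spin{9})$-invariant subspaces'' therefore hides a genuine parameter problem: you must parametrize these families, impose the bracket-closure conditions as algebraic equations on the parameters, and then quotient by the residual $\OG{2}$-centralizer action to reduce to finitely many conjugacy classes. This is all doable, but it is substantially more work than the ``detailed bookkeeping'' you anticipate, and it is exactly what the paper's maximal-subgroup descent avoids.
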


\begin{proof}
We use~\cite[Chapter~1]{Dynkin2}, see also~\cite[Theorems~2.1 and~2.2]{kollross}, to determine maximal (proper) connected subgroups of compact Lie groups.

It follows from~\cite[Theorem~2.2]{kollross} that the maximal connected subgroups of~$\SO{32}$ containing~ a diagonal $\Spin9$ are $\SO{16} \times \SO{16}$ and $\U{16}$. (Note that the only irreducible representation of real type of a simple compact Lie group of degree~$32$ is the standard representation of~$\SO{32}$ itself.) We distinguish the following cases.
	\smallskip
	
\emph{1. H is a proper subgroup of $\SO{16} \times \SO{16}$}.
By~\cite[Theorems~2.1 and~2.2]{kollross}, after conjugation in $\mathsf{O}(32)$, $H$ is contained in $\Spin9 \times \SO{16}$ or in a diagonal subgroup $\{(g,\phi(g)) \mid g \in \SO{16} \}$, where $\phi$ is an automorphism of~$\SO{16}$; but any automorphism of $\SO{16}$ is given by conjugation in $\mathsf{O}(16)$.  We have to consider the connected subgroups in both cases.
	\begin{enumerate}
\item[\emph{1a}.] The only maximal connected subgroup of $\Spin9 \times \SO{16}$ containing a diagonal $\Spin9$  is $\Spin9\times\Spin9$. This product has the diagonal $\Spin9$ as a maximal connected subgroup.
\item[{\emph{1b}.}] Since $\Spin9$ is maximal in $\SO{16}$, we readily have that the only proper connected subgroup of a diagonal $\SO{16}$ in $\SO{32}$ containing a diagonal $\Spin{9}$ is this diagonal $\Spin{9}$ itself.
\end{enumerate}

	\emph{2. $H$ is a proper subgroup of $\U{16}$}.
	The maximal connected subgroups of~$\U{16}$ are $\SU{16}$ and groups of the form~$G \cdot \SO2$, where $G$ is a maximal connected subgroup of~$\SU{16}$. The only maximal connected subgroups of~$\U{16}$ that contain a diagonal~$\Spin9$ are $\SO{16} \cdot \SO2$, $\Spin{10} \cdot \SO2$ and $\SU{16}$.
	\begin{enumerate}
	\item [{\emph{2a}.}]
	The maximal connected subgroups of $\SO{16} \cdot \SO2$ that contain a diagonal~$\Spin9$ are the following: $\Spin9 \cdot \SO2$ and a diagonal $\SO{16}$. In both cases, $\Spin9$ is the only maximal connected subgroup containing $\Spin9$.
	\item[{\emph{2b}.}]
	The maximal connected subgroups of $\Spin{10} \cdot \SO2$ that contain a diagonal~$\Spin9$ are the following: $\Spin{10}$ and $\Spin9 \cdot \SO2$. The only maximal connected subgroup of~$\Spin{10}$ which contains a diagonal~$\Spin9$ is~$\Spin9$ itself.
	\item[{\emph{2c}.}]
	Finally, we have to consider the case where $H$ is contained in~$\SU{16}$. Its only maximal connected
	subgroup that contains $\Spin9$ is $\SO{16}$.
	\end{enumerate}
	
	The cohomogeneities of the actions of the various groups $H$ on $\R^{32}$ are well known~in most cases.  It follows from~\cite[p.~21, Figure~(ii)]{HL:jdg} that $\Spin{9}\cdot\SO{2}$ acts with cohomogeneity~$3$, and it can be easily determined that $\Spin{9}$ acts with cohomogeneity~$4$. The actions of $\Spin{10}\cdot \SO{2}$ and of $\Spin{10}$ are orbit equivalent to the $s$-representation of the rank $2$ symmetric space $\E{6}/\Spin{10}\cdot\SO{2}$ (see~\cite{EH:mathz}), and hence of cohomogeneity~$2$.
\end{proof}

\begin{proposition}\label{prop:2cases}
	We have $\Aut(\cal{F}_{(8,7)})^0=\Spin{9}\cdot \SO{2}$ and $\Aut(\cal{F}_{\underline{(8,7)}})^0=\Spin9$.
\end{proposition}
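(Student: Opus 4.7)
My plan is to combine Lemma~\ref{lemma:lattice} with the known inhomogeneity of both FKM foliations. The inclusions in~\eqref{eq:inclusions} already give $\Spin{9}\cdot\SO{2}\subseteq\Aut(\cal{F}_{(8,7)})^0$ and $\Spin{9}\subseteq\Aut(\cal{F}_{\underline{(8,7)}})^0$, each placing a \emph{diagonal} $\Spin{9}$ inside $\Aut(\cal{F})^0$. Lemma~\ref{lemma:lattice} then constrains $\Aut(\cal{F})^0$ as soon as I verify that its action on $\R^{32}$ has cohomogeneity at least~$3$; establishing this cohomogeneity bound is the first key step.

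To establish it, I will first argue that $\Aut(\cal{F})^0$ preserves every leaf of $\cal{F}$. The group $\Aut(\cal{F})^0$ acts continuously on the one-dimensional space of leaves of $\cal{F}$, a closed interval whose two boundary points are the focal leaves; since $m_1\neq m_2$ these focal leaves have different dimensions and are individually fixed by $\Aut(\cal{F})$, so the continuous action of the connected group $\Aut(\cal{F})^0$ on this interval fixes both endpoints and is therefore trivial. Consequently every $\Aut(\cal{F})^0$-orbit is contained in a single leaf. Since $\cal{F}_{(8,7)}$ and $\cal{F}_{\underline{(8,7)}}$ are both inhomogeneous, $\Aut(\cal{F})^0$ cannot act transitively on any regular leaf; such a leaf is $30$-dimensional in $\sph^{31}$, so generic orbits have dimension at most~$29$, yielding cohomogeneity at least~$2$ on $\sph^{31}$ and at least~$3$ on $\R^{32}$.

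Applying Lemma~\ref{lemma:lattice}(i) to $\cal{F}_{(8,7)}$ then immediately yields $\Aut(\cal{F}_{(8,7)})^0=\Spin{9}\cdot\SO{2}$. For $\cal{F}_{\underline{(8,7)}}$, Lemma~\ref{lemma:lattice}(ii) leaves three candidates---$\Spin{9}$, a diagonal $\SO{16}$, or $\Spin{9}\cdot\SO{2}$---and I need to discard the last two. To this end, I will work in the decomposition $\R^{32}=\g{d}_+\oplus\g{d}_-$ into the two inequivalent irreducible $\mathsf{Cl}^*_9$-modules together with a $\Spin{9}$-equivariant identification $\g{d}_+\cong\g{d}_-$. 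Since the two $\mathsf{Cl}^*_9$-structures on the underlying $\Spin{9}$-module differ by the grading automorphism of $\mathsf{Cl}^*_9$ (which negates every Clifford generator), each $P_i$ takes the block form $\diag(P_i^+,-P_i^+)$ with $P_i^+:=P_i|_{\g{d}_+}$; this normal form is the technical core of the argument.

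With this normal form, conjugation by a diagonal element $(g,g)\in\SO{16}$ sends $P_i$ to $\diag(gP_i^+g^{-1},-gP_i^+g^{-1})$, which lies in $\cal{P}$ only if $g$ normalizes $\spann\{P_0^+,\dots,P_8^+\}$ in $\mathrm{End}(\R^{16})$; this normalizer in $\OG{16}$ is $\mathsf{Pin}(9)$, whose identity component is only $\Spin{9}$, so the diagonal $\SO{16}$ is discarded. For the $\SO{2}$ centralizing the diagonal $\Spin{9}$ in $\SO{32}$, which acts by rotations $R(\theta)$ of the two summands, a direct block computation gives
\[
R(\theta)\,P_i\,R(-\theta)=\cos(2\theta)\,P_i+\sin(2\theta)\begin{pmatrix}0 & P_i^+ \\ P_i^+ & 0\end{pmatrix},
\]
whose second summand is off-diagonal and hence not in $\cal{P}$ unless $\sin(2\theta)=0$. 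This rules out $\Spin{9}\cdot\SO{2}$ and leaves $\Aut(\cal{F}_{\underline{(8,7)}})^0=\Spin{9}$. The main obstacle I anticipate is setting up the normal form $\diag(P_i^+,-P_i^+)$ correctly via the grading automorphism of $\mathsf{Cl}^*_9$ and matching it with the specific embeddings of the diagonal $\SO{16}$ and the centralizer $\SO{2}$ supplied by Lemma~\ref{lemma:lattice}; once in place, both computations needed to discard the larger candidates are brief.
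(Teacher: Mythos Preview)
Your argument for the cohomogeneity bound and for $\cal{F}_{(8,7)}$ is correct and matches the paper's. The gap is in how you discard the two larger candidates for $\Aut(\cal{F}_{\underline{(8,7)}})^0$. You show that the diagonal $\SO{16}$ and the centralizing $\SO{2}$ fail to send $\cal{P}$ into itself under conjugation, and conclude they are not in $\Aut(\cal{F})^0$. But membership in $\Aut(\cal{F})$ is a \emph{weaker} condition than normalizing $\cal{P}$: an orthogonal $A$ lies in $\Aut(\cal{F})$ exactly when it preserves the Cartan--M\"unzner polynomial, i.e., when $\sum_i\langle A^{-1}P_iAx,x\rangle^2=\sum_i\langle P_ix,x\rangle^2$ for all $x$, which can hold even if $A\cal{P}A^{-1}\neq\cal{P}$. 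This is not a technicality---it is precisely the reason the $(m_1,m_2)=(8,7)$ foliations were left open: the implication ``$\Aut(\cal{F})$ normalizes $\cal{P}$'' is known only under the hypothesis $m_1\leq m_2$, and fails in general when $m_1>m_2$. So your block computations, while algebraically correct, do not exclude these groups from $\Aut(\cal{F}_{\underline{(8,7)}})^0$.

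The paper avoids this obstacle by arguing geometrically with the orbit structure, using only that $\Aut(\cal{F})^0$ preserves each leaf. For the diagonal $\SO{16}$: its singular orbits on $\sph^{31}$ are $15$-dimensional and together fill only a $16$-dimensional set, which cannot foliate the $22$-dimensional focal leaf $M_1$. For $\Spin{9}\cdot\SO{2}$: the inhomogeneous $22$-dimensional focal leaf $M_1$ (inhomogeneity from \cite[Satz~6.6(i)]{FKM:mathz}) would have to be a union of orbits of dimension at most $21$; but the Hsiang--Lawson orbit description shows there are only two such orbits, so they cannot assemble into a $22$-manifold. These dimension counts bypass the Clifford system entirely, which is exactly what is needed here.
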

\begin{proof}
	Since both foliations $\cal{F}_{(8,7)}$ and $\cal{F}_{\underline{(8,7)}}$ are inhomogeneous, the connected component of the identity of their automorphism group acts with cohomogeneity at least $3$ on $\R^{32}$. Thus, the equality for $\Aut(\cal{F}_{(8,7)})^0$ readily follows from~\eqref{eq:inclusions} and Lemma~\ref{lemma:lattice}~(i). Furthermore, Lemma~\ref{lemma:lattice}~(ii) implies that $\Aut(\cal{F}_{\underline{(8,7)}})^0$ is either $\SO{16}$, $\Spin{9}\cdot \SO{2}$ or $\Spin{9}$. The proposition will be proved once we discard the first two possibilities.
	
	\smallskip
	\emph{Case $\SO{16}$.} Up to equivalence, an arbitrary diagonal action of $\SO{16}$ on $\R^{32}=\R^{16}\oplus\R^{16}$ is of the form
	$A\cdot (v,w):=(Av,Aw)$,
	for $A\in\SO{16}$, $v,w\in\R^{16}$.
	The principal orbits of such an action on $\sph^{31}$ have dimension $29$, and its singular orbits are of dimension $15$. The points of $\mathbb{S}^{31}\subset\R^{32}$ lying at singular orbits are precisely those of the form $(\lambda v,\mu v)$, where $v\in\mathbb{S}^{15}$ and  $\lambda^2+\mu^2=1$. This constitutes a $16$-dimensional submanifold of $\mathbb{S}^{31}$. If $\Aut(\cal{F}_{\underline{(8,7)}})^0$ were the subgroup of $\SO{32}$ given by the image of the representation above, the $22$-dimensional focal leaf $M_1$ of $\cal{F}_{\underline{(8,7)}}$ would have to be foliated by the $15$-dimensional (singular) orbits of the action of $\Aut(\cal{F}_{\underline{(8,7)}})^0\cong\SO{16}$. However, as shown above, the union of these orbits has dimension $16$, which yields a contradiction.
	
	\smallskip
	\emph{Case $\Spin{9}\cdot\SO{2}$.} The action of this group on $\mathbb{S}^{31}$ is also of cohomogeneity two. Its orbit structure was described in~\cite[p.~21, Figure~(ii)]{HL:jdg}. Apart from its $29$-dimensional principal orbits, it has infinitely many singular orbits diffeomorphic to $(\Spin{9}\cdot \SO{2})/\G$ (dimension $23$), to $\Spin{9}/\SU{3}$ (dimension $28$) and to $(\Spin{9}\cdot \SO{2})/\SU{4}$ (dimension $22$), and exactly one singular orbit diffeomorphic to each one of the spaces $\Spin{9}/\G$ (dimension $22$),  $(\Spin{9}\cdot \SO{2})/\Spin{7}$ (dimension $16$) and $\Spin{9}/\SU{4}$ (dimension $21$). The $22$-dimensional focal leaf $M_1$ of $\cal{F}_{\underline{(8,7)}}$ is known to be inhomogeneous, as shown in~\cite[Satz~6.6~(i)]{FKM:mathz}. Therefore, if $\Aut(\cal{F}_{\underline{(8,7)}})^0\cong\Spin{9}\cdot\SO{2}$, then $M_1$ should be the union of some of the orbits described before, with dimensions at most $21$. But this is impossible, since there are only two such orbits.
\end{proof}

\section{Analysis of the projected foliations}\label{sec:projections}
In this section we determine the congruence classes of polar foliations of $\C \P^{15}$ and $\H \P^7$ that pull back (under the corresponding Hopf fibrations) to an FKM foliation congruent to $\cal{F}_{(8,7)}$ or $\cal{F}_{\underline{(8,7)}}$. This will allow us to prove Theorem~\ref{th:87} and Corollaries~\ref{cor:15,7} and~\ref{cor:codim1}.

\subsection{General approach to the classification of polar foliations of $\C\P^n$ and $\H \P^n$}\label{subsec:approach}

We will start by establishing the setup and notation of the approach proposed in~\cite{Do:tams} and \cite{DG:tohoku}. In order to allow for a unified discussion as much as possible, we will denote $\K\in\{\C,\H\}$, ${a}=\dim_\R \K$, and consider the Euclidean space $V=\R^{{a}(n+1)}$, its unit sphere $\sph(V)=\sph^{{a} (n+1)-1}$, and the Hopf fibration $\pi\colon \sph(V)\to\K \P^n$. The starting point of the approach is the following well-known fact, see~\cite[Proposition~2.1]{Do:tams}, \cite[Proposition~2.1]{DG:tohoku} and \cite[Proposition~9.1]{Lytchak:gafa}.

\begin{fact}\label{fact:starting}
	Let $\cal{G}$ be a singular Riemannian foliation of $\K \P^n$. Then $\cal{G}$ is polar if and only if $\pi^{-1}\cal{G}$ is a polar foliation of $\sph(V)$.
\end{fact}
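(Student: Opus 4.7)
The plan is to exploit that $\pi\colon\sph(V)\to\K\P^n$ is a Riemannian submersion with closed totally geodesic fibers, under which horizontal totally geodesic submanifolds of $\sph(V)$ correspond bijectively to totally geodesic submanifolds of $\K\P^n$ via projection and horizontal lift. The pullback $\pi^{-1}\cal{G}$ is automatically a singular Riemannian foliation of $\sph(V)$, so only the polarity condition needs to be transferred in both directions. The crucial preliminary observation is that \emph{the Hopf foliation is automatically a subfoliation of $\pi^{-1}\cal{G}$}: each leaf of $\pi^{-1}\cal{G}$ is of the form $\pi^{-1}(L)$ for some leaf $L$ of $\cal{G}$, and is therefore saturated by the Hopf fibers. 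Hence, at every regular point, the vertical distribution of $\pi$ sits inside the tangent space to the leaf of $\pi^{-1}\cal{G}$, and the normal distribution to $\pi^{-1}\cal{G}$ is entirely horizontal.

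For the direction $(\Leftarrow)$, the plan is to project. Fix $\tilde{p}\in\pi^{-1}(p)$ and a section $\tilde{\Sigma}$ of $\pi^{-1}\cal{G}$ through $\tilde{p}$. The preceding observation forces $\tilde{\Sigma}$ to be horizontal at every point, so $\pi\vert_{\tilde{\Sigma}}$ is a local isometric immersion and $\Sigma:=\pi(\tilde{\Sigma})$ is a totally geodesic immersed submanifold of $\K\P^n$ through $p$. Because leaves of $\pi^{-1}\cal{G}$ are $\pi$-preimages of leaves of $\cal{G}$ and $\tilde{\Sigma}$ meets each of them, $\Sigma$ meets every leaf of $\cal{G}$. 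Orthogonality transfers since $d\pi$ restricted to horizontal vectors is an isometry sending the (horizontal) normal space of $\pi^{-1}\cal{G}$ onto the normal space of $\cal{G}$.

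For the direction $(\Rightarrow)$, the plan is to lift. Given a section $\Sigma$ of $\cal{G}$ through $p=\pi(\tilde{p})$, define $\tilde{\Sigma}\subset\sph(V)$ as the union of horizontal lifts starting at $\tilde{p}$ of geodesics in $\Sigma$ emanating from $p$. Since $\Sigma$ is totally geodesic in the base and horizontal lifts of geodesics are geodesics in the total space (because the fibers of $\pi$ are totally geodesic), $\tilde{\Sigma}$ is a horizontal totally geodesic submanifold of $\sph(V)$ of the same dimension as $\Sigma$. For any leaf $\tilde L=\pi^{-1}(L)$ of $\pi^{-1}\cal{G}$, one picks $q\in L\cap\Sigma$, lifts horizontally a geodesic in $\Sigma$ from $p$ to $q$ starting at $\tilde{p}$, and observes that its endpoint lies in $\tilde{\Sigma}\cap\pi^{-1}(q)\subset\tilde{\Sigma}\cap\tilde L$. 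Orthogonality transfers via $d\pi$ exactly as in the previous direction.

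The main technical point is to confirm that $\tilde{\Sigma}$ really meets \emph{every} leaf of $\pi^{-1}\cal{G}$ in the forward direction; this is settled by the fact that the leaves of $\pi^{-1}\cal{G}$ are $\pi$-saturated (i.e.\ invariant under the Hopf action), which is immediate from the definition of $\pi^{-1}\cal{G}$ and reduces the meeting problem to the already established meeting of $\Sigma$ with leaves of $\cal{G}$. A secondary care is needed at singular leaves, where totally geodesic sections may degenerate to lower-dimensional pieces, but since $\Sigma$ meets every leaf (regular or singular), the horizontal-lift construction still produces the required intersection point in $\tilde{\Sigma}$. Everything else is a routine application of the classical correspondence between horizontal totally geodesic submanifolds in the total space and totally geodesic submanifolds in the base of a Riemannian submersion with totally geodesic fibers.
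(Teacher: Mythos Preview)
The paper does not give its own proof of this fact; it merely cites \cite[Proposition~2.1]{Do:tams}, \cite[Proposition~2.1]{DG:tohoku} and \cite[Proposition~9.1]{Lytchak:gafa}. So there is no in-paper argument to compare against, and the question is simply whether your sketch is correct.

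Your $(\Leftarrow)$ direction is sound: since the Hopf fibers lie in the leaves of $\pi^{-1}\cal G$, any section $\tilde\Sigma$ is horizontal at regular points (hence everywhere by continuity), so it projects to a totally geodesic submanifold of $\K\P^n$ meeting all leaves of~$\cal G$ orthogonally.

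The $(\Rightarrow)$ direction, however, has a genuine gap. You assert that the union $\tilde\Sigma$ of horizontal lifts of geodesics in $\Sigma$ emanating from $p$ is a \emph{horizontal} totally geodesic submanifold. It is indeed a totally geodesic great sphere, namely $\sph(\R\tilde p\oplus\tilde W)$ with $\tilde W$ the horizontal lift of $T_p\Sigma$; but being a union of horizontal geodesics through a single point does not force horizontality away from the radial directions. A direct computation shows that $\sph(\R\tilde p\oplus\tilde W)$ is horizontal at every point if and only if $J\tilde W\perp\tilde W$ (respectively $\g q\,\tilde W\perp\tilde W$), i.e.\ if and only if $T_p\Sigma$ is a totally real subspace of $T_p\K\P^n$. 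If this fails at some $\tilde q\in\tilde\Sigma$, then $T_{\tilde q}\tilde\Sigma$ has a non-zero vertical component, which lies inside the leaf of $\pi^{-1}\cal G$ through $\tilde q$; hence $\tilde\Sigma$ cannot meet that leaf perpendicularly and is not a section. Concretely, if one could take $\Sigma\cong\C\P^k$ (a totally geodesic but \emph{complex} submanifold of $\C\P^n$), your construction would produce a non-horizontal great sphere. The trivial foliation of $\C\P^n$ by points---polar with section $\Sigma=\C\P^n$---pulls back to the Hopf foliation, whose horizontal distribution is non-integrable, illustrating exactly this obstruction.

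What is missing is the (true, but not addressed in your sketch) statement that sections of non-trivial polar foliations on $\K\P^n$ are necessarily totally real, so that $\Sigma\cong\R\P^k$. This is precisely the sort of input supplied by the cited references; without it the lifting step does not go through.
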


\begin{remark}[Polar versus isoparametric foliations]\label{rem:isop}
	An isoparametric foliation is a polar foliation whose regular leaves have parallel mean curvature, or equivalently, a polar foliation whose regular leaves 	are isoparametric submanifolds in the sense of Heintze-Liu-Olmos~\cite{HLO}.
	As noticed in~\cite[Remark~2.2]{DG:tohoku}, polar and isoparametric foliations constitute the same subclass of singular Riemannian foliations not only on round spheres, but also on complex and quaternionic projective spaces $\K\P^n$. This has been greatly generalized by Liu and Radeschi~\cite{LiuRadeschi}, who proved that a singular Riemannian foliation of a simply connected symmetric space of non-negative curvature is polar if and only if it is isoparametric. However, whereas for round spheres and complex projective spaces we know that an isoparametric submanifold is an open subset of a leaf of an isoparametric (or polar) foliation filling the whole space, this seems to be an open problem in $\H \P^n$ (except for isoparametric hypersurfaces), see~\cite[Remark~2.3]{DG:tohoku}, and even more for other symmetric spaces of compact type.
\end{remark}

In view of Fact~\ref{fact:starting}, the classification of polar foliations of $\K\P^n$ can theoretically be obtained by determining which polar foliations of the round sphere $\sph(V)$ are subfoliated by the fibers of the Hopf fibration, and then analyzing the congruence of the projected foliations to $\K\P^n$. However, two congruent foliations $\cal{F}$, $\cal{F}'=A\cal{F}$ on $\sph(V)$, with $A\in\OG{V}$, may yield non-congruent foliations $\pi(\cal{F})$, $\pi(\cal{F}')$ on $\K\P^n$. Thus, one should fix a representative $\cal{F}$ of a congruence class of polar foliations of $\sph(V)$, determine the set $\cal{A}_\cal{F}$ of those $A\in\OG{V}$ such that $A\cal{F}$ is subfoliated by the Hopf fibration, and then analyze when $\pi(A\cal{F})$ and $\pi(B\cal{F})$ are congruent foliations of $\K\P^n$, for any $A,B\in\cal{A}_\cal{F}$.

Instead of this, we will follow the (equivalent but more convenient) approach proposed in~\cite{Do:tams} and~\cite{DG:tohoku} (which goes back to Xiao~\cite{Xiao}) consisting in studying all Hopf fibrations that subfoliate a given representative $\cal{F}$ of a congruence class of polar foliations of $\sph(V)$. This leads to the study of complex or quaternionic structures that preserve a given foliation $\cal{F}$ of $\sph(V)$.

As mentioned in the introduction just below Problem (P), a Hopf fibration $\sph(V)\to\K \P^n$ is determined by a complex structure $J$ or a quaternionic structure $\g{q}$ on $V$, depending on whether $\K=\C$ or $\K=\H$. Thus, we write $\pi_J$ or $\pi_\g{q}$ for the corresponding Hopf fibration. Given a singular Riemannian foliation $\cal{F}$ of the unit sphere $\sph(V)$ of $V$, we say that a complex structure $J$ or a quaternionic structure $\g{q}$ on $V$ preserves $\cal{F}$ if $\cal{F}$ is the pullback of a singular Riemannian foliation $\cal{G}$ of $\K\P^n$ under the Hopf fibration $\pi_J$ or $\pi_\g{q}$, respectively. Equivalently, $J$ or $\g{q}$ preserves $\cal{F}$ if $\cal{F}$ is subfoliated by the leaves of the Hopf fibration $\pi_J$ or $\pi_\g{q}$, that is, if each leaf of $\cal{F}$ is a union of Hopf fibers of $\pi_J$ or $\pi_\g{q}$, respectively.

Let $\cal{F}$ be a singular Riemannian foliation of $\sph(V)$ whose leaves are closed. Consider an effective representation $\rho\colon K\to\SO{V}$ of a Lie group $K$ such that $\rho(K)$ is the maximal connected subgroup of $\SO{V}$ leaving each leaf of $\cal{F}$ invariant. (Of course, one can take directly $K$ as such a subgroup of $\SO{V}$ and $\rho$ as the inclusion map.) Since $\cal{F}$ is closed, $K$ is compact. Let $\rho_*\colon\g{k}\to\g{so}(V)$ be the Lie algebra homomorphism determined by $\rho$.

\begin{fact}\label{fact:preserve}
 A complex structure $J$ on $V$ preserves $\cal{F}$ if and only if $J=\rho_*(X)$ for some $X\in\g{k}$.
 A quaternionic structure $\g{q}$ on $V$ preserves $\cal{F}$ if and only if $\g{q}=\rho_*(\g{s})$ for some Lie subalgebra $\g{s}$ of $\g{k}$ (isomorphic to $\g{su}(2)$).
\end{fact}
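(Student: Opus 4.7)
The plan is to translate the condition ``$J$ (resp.\ $\g{q}$) preserves $\cal{F}$'' into a statement about one-parameter subgroups (resp.\ $\SU{2}$-subgroups) of $\SO{V}$ leaving each leaf of $\cal{F}$ invariant, and then invoke the maximality of $\rho(K)$ together with the effectiveness of $\rho$.

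First I would unpack the definition. By the definition recalled just before the fact, $J$ preserves $\cal{F}$ precisely when every leaf of $\cal{F}$ is a union of $\pi_J$-fibers, and analogously for $\g{q}$. But a $\pi_J$-fiber through $p\in\sph(V)$ is exactly the $\U{1}$-orbit of $p$ under the one-parameter subgroup $H_J:=\exp(\R J)\subset\SO{V}$ (a closed circle, since $J^2=-\id$ implies $\exp(tJ)=\cos(t)\id+\sin(t)J$ is $2\pi$-periodic), and a $\pi_\g{q}$-fiber through $p$ is the $\SU{2}$-orbit of $p$ under the connected (compact) subgroup $H_\g{q}\subset\SO{V}$ with Lie algebra $\g{q}$. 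Since these orbits are connected and contain $p$, requiring each leaf to be a union of such fibers is equivalent to requiring $H_J$ (resp.\ $H_\g{q}$) to leave every leaf of $\cal{F}$ invariant.

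The next step is to apply the definition of $K$. The subgroup $H_J$ (resp.\ $H_\g{q}$) is a connected subgroup of $\SO{V}$ preserving each leaf of $\cal{F}$, so by the maximality of $\rho(K)$ we conclude $H_J\subseteq \rho(K)$ (resp.\ $H_\g{q}\subseteq\rho(K)$); taking Lie algebras gives $\R J\subseteq \rho_*(\g{k})$ (resp.\ $\g{q}\subseteq \rho_*(\g{k})$). Effectiveness of $\rho$ makes $\rho_*$ injective, so there is a unique $X\in\g{k}$ with $\rho_*(X)=J$, and $\g{s}:=\rho_*^{-1}(\g{q})$ is a subalgebra of $\g{k}$ isomorphic to $\g{q}\cong\g{su}(2)$ with $\rho_*(\g{s})=\g{q}$. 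The converse is immediate: if $J=\rho_*(X)$ then $H_J=\rho(\exp(\R X))\subseteq \rho(K)$, and if $\g{q}=\rho_*(\g{s})$ then $H_\g{q}$ is the $\rho$-image of the connected subgroup of $K$ with Lie algebra $\g{s}$, hence also lies in $\rho(K)$; in either case, the corresponding Hopf fibers are contained in $\rho(K)$-orbits, which lie inside single leaves of $\cal{F}$.

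There is no real obstacle here; the statement is essentially a reformulation of the hypothesis on $\rho(K)$. The only point deserving a line of care is the equivalence, in the first paragraph, between ``$\cal{F}$ subfoliated by Hopf fibers'' and ``$H_J$ (or $H_\g{q}$) preserves every leaf of $\cal{F}$''; once this is recorded, the rest is the injectivity of $\rho_*$ and the maximality built into the definition of $\rho$.
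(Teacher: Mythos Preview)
Your argument is correct and is exactly the natural one: unwinding the definition of ``preserves $\cal{F}$'' as invariance of each leaf under the connected group $H_J$ (resp.\ $H_\g{q}$), and then using that $\rho(K)$ is by definition the maximal connected subgroup of $\SO{V}$ with this property, together with injectivity of $\rho_*$. The paper itself does not supply a proof of this fact; it is recorded as part of the setup imported from \cite{Do:tams} and \cite{DG:tohoku}, and your reasoning matches what underlies those references.
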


Let $\cal{J}$ denote the subset of the elements $X\in\g{k}$ such that $\rho_*(X)$ is a complex structure on $V$ preserving $\cal{F}$. Similarly, we denote by $\cal{S}$ the set of all Lie subalgebras $\g{s}$ of $\g{k}$ such that $\rho_*(\g{s})$ is a quaternionic structure on $V$ preserving $\cal{F}$. Note that $\cal{J}$ and $\cal{S}$ depend on $\cal{F}$, but we prefer not to overload the notation. Clearly, $\{\rho_*(X):X\in\cal{J}\}$ (resp.\ $\{\rho_*(\g{s}):\g{s}\in\cal{S}\}$) is the set of all complex (resp.\ quaternionic) structures on $V$ that preserve $\cal{F}$.

\begin{fact}\label{fact:AdK_invariant}
	$\cal{J}$ and $\cal{S}$ are $\Ad(K)$-invariant.
\end{fact}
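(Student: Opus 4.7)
The plan is to exploit the intertwining identity
\[
\rho_*(\Ad(k)X) = \rho(k)\,\rho_*(X)\,\rho(k)^{-1}, \qquad k\in K,\ X\in\g{k},
\]
and verify that conjugation by $\rho(k)$ preserves both defining properties of an element of $\cal{J}$ (resp.\ $\cal{S}$): being a complex (resp.\ quaternionic) structure on $V$, and preserving $\cal{F}$. The same identity, applied to each vector of a basis of $\g{s}$, shows that $\rho_*(\Ad(k)\g{s}) = \rho(k)\,\rho_*(\g{s})\,\rho(k)^{-1}$.

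First I would handle the algebraic condition. Since $\rho(k)\in\SO{V}$, conjugation by $\rho(k)$ is an automorphism of $\g{so}(V)$ that also sends elements $A$ with $A^2=-\id$ to elements with the same property. Thus if $J=\rho_*(X)$ is a complex structure on $V$, then $\rho_*(\Ad(k)X)=\rho(k)J\rho(k)^{-1}$ is again a complex structure. Similarly, conjugation by $\rho(k)$ carries any three-dimensional Lie subalgebra of $\g{so}(V)$ equipped with a basis satisfying the quaternionic relations $J_iJ_{i+1}=J_{i+2}$, $J_i^2=-\id$ to another such subalgebra, so $\rho_*(\Ad(k)\g{s})$ is a quaternionic structure whenever $\rho_*(\g{s})$ is.

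Next I would verify preservation of $\cal{F}$ by the conjugated structure. By the very choice of $K$ as the maximal connected subgroup of $\SO{V}$ leaving each leaf of $\cal{F}$ invariant, the isometry $\rho(k)$ maps every leaf $L$ of $\cal{F}$ to itself. Therefore, for any $p\in L$, the $\U{1}$-orbit of $\rho(k)J\rho(k)^{-1}$ through $p$ coincides with $\rho(k)$ applied to the $\U{1}$-orbit of $J$ through $\rho(k)^{-1}p\in L$; the latter orbit lies in $L$ because $J\in\cal{J}$ preserves $\cal{F}$, and applying $\rho(k)$ keeps it inside $\rho(k)(L)=L$. The same reasoning, with the $\SU{2}$-action generated by $\rho_*(\g{s})$ replacing the $\U{1}$-action generated by $J$, handles the quaternionic case. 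Combined with the previous step, this shows $\Ad(k)X\in\cal{J}$ and $\Ad(k)\g{s}\in\cal{S}$.

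The main obstacle here is, honestly, negligible: the fact is purely formal and follows from the definitions together with the naturality of the differential of $\rho$. The only subtle point worth flagging in the write-up is that the defining characterization of $\rho(K)$ as the group of leaf-preserving connected isometries is exactly what makes $\rho(k)$ act leafwise on $\cal{F}$, and this is precisely what drives the second half of the verification.
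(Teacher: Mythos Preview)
Your argument is correct. The paper itself does not supply a proof of this fact; it is simply recalled from the references~\cite{Do:tams} and~\cite{DG:tohoku} as part of the general setup, so there is no ``paper's own proof'' to compare against. The intertwining identity $\rho_*(\Ad(k)X)=\rho(k)\rho_*(X)\rho(k)^{-1}$ is exactly the right tool, and your two-step verification is sound.

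One minor simplification is available for the second step. By Fact~\ref{fact:preserve}, a complex structure on $V$ preserves $\cal{F}$ if and only if it lies in the image of $\rho_*$. Since $\Ad(k)X\in\g{k}$, the element $\rho_*(\Ad(k)X)$ is automatically in this image, so once you know it is a complex structure, preservation of $\cal{F}$ is immediate---no need to track Hopf orbits through leaves explicitly. The same remark applies verbatim to the quaternionic case. Your direct argument is of course valid and more self-contained; it just does a little more work than strictly necessary given what has already been established.
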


This will allow us to simplify the study of $\cal{J}$ and $\cal{S}$ by considering a (fixed but arbitrary) maximal abelian subalgebra $\g{t}$ of $\g{k}$. Thus, $\cal{J}$ will be known once $\cal{J}\cap \g{t}$ is determined; and $\cal{S}$ will be known once the set $\cal{S}_\g{t}=\{\g{s}\in\cal{S}:\g{s}\cap\g{t}\neq 0\}$  is determined.

Consider the complexification $\rho_*^\C\colon \g{k}^\C\to\g{gl}(V^\C)$ of $\rho_*$. Recall that a weight of the complex representation $\rho_*^\C$ is a covector $\lambda\in(\g{t}^\C)^*$ such that the subspace $V_\lambda=\{v\in V^\C:\rho_*^\C(T)v=i\lambda(T)v,\text{ for all }T\in\g{t}\}$ is non-zero. 

\begin{fact}\label{fact:pm1}
	Let $T\in\g{t}$. Then $\rho_*(T)$ is a complex structure on $V$ (preserving $\cal{F}$, due to Fact~\ref{fact:preserve}) if and only if $\lambda(T)\in\{\pm 1\}$ for every weight $\lambda$ of $\rho_*^\C$.
\end{fact}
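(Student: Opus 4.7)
The plan is essentially a weight-space calculation, and the structure of the argument is dictated almost entirely by the fact that $\rho_*(T)$ is already skew-symmetric. Since $\rho$ takes values in $\SO{V}$, the differential $\rho_*(T)$ always lies in $\g{so}(V)$. Thus, according to the definition of complex structure given in the introduction, the only condition one needs to verify in order for $\rho_*(T)$ to be a complex structure on $V$ is that $\rho_*(T)^2=-\id_V$; the parenthetical clause about $\cal{F}$ being preserved then comes for free from Fact~\ref{fact:preserve}, since $T\in\g{k}$.

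The key step is to pass to the complexification. Extend $\rho_*$ complex-linearly to $\rho_*^\C\colon\g{k}^\C\to\g{gl}(V^\C)$, and decompose $V^\C=\bigoplus_\lambda V_\lambda$ into joint eigenspaces for the abelian family $\rho_*^\C(\g{t})$; this is exactly the weight-space decomposition appearing in the statement. By definition of weight, $\rho_*^\C(T)$ acts on $V_\lambda$ as multiplication by the scalar $i\lambda(T)$. Consequently, $\rho_*^\C(T)^2$ acts on each $V_\lambda$ as $-\lambda(T)^2\cdot\id_{V_\lambda}$, and the identity $\rho_*(T)^2=-\id_V$ on $V$ is equivalent to its complexification $\rho_*^\C(T)^2=-\id_{V^\C}$, which in turn holds if and only if $\lambda(T)^2=1$ for every weight $\lambda$.

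To finish, I would observe that $\lambda(T)\in\R$ for every weight $\lambda$, which is simply the statement that the spectrum of the skew-adjoint operator $\rho_*(T)\in\g{so}(V)$ is purely imaginary; hence $i\lambda(T)$ is purely imaginary, so $\lambda(T)$ is real. With $\lambda(T)\in\R$, the condition $\lambda(T)^2=1$ becomes $\lambda(T)\in\{\pm 1\}$, as desired. There is no real obstacle in this proof: it is a clean application of simultaneous diagonalizability of the abelian action of $\g{t}^\C$ on $V^\C$, and the only subtle point worth highlighting is that the evaluation $\lambda(T)$ is taken on the \emph{real} element $T\in\g{t}$ so that reality holds automatically.
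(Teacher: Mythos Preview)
Your argument is correct. The paper does not actually supply a proof of this Fact; it is one of several results recalled from~\cite{Do:tams} and~\cite{DG:tohoku} as part of the general approach summarized in~\S\ref{subsec:approach}, and your weight-space computation is exactly the standard justification one would expect.
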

The previous fact provides a computational method to determine $\cal{J}\cap\g{t}$ once we know the weights of $\rho^\C_*$. Now, the problem in the quaternionic setting can be reduced to the problem in the complex case, that is, $\cal{S}_\g{t}$ is given by the $\g{su}_2$-subalgebras of $\g{k}$ that intersect $\cal{J}\cap\g{t}$. This is due to the following:
\begin{fact}\label{fact:su_2_vs_complex}
	Let $\g{s}$ be an $\g{su}_2$-subalgebra of $\g{k}$. The following conditions are equivalent:
	\begin{enumerate}[\rm(i)]
		\item $\g{s}\in\cal{S}$,
		\item there exists $X\in\g{s}\cap\cal{J}$,
		\item for any one-dimensional subspace $\ell$ of $\g{s}$, we have $\ell\cap\cal{J}=\{\pm X_\ell\}\neq \emptyset$.
\end{enumerate}
\end{fact}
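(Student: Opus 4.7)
My plan is to prove the cycle (i) $\Rightarrow$ (ii) $\Rightarrow$ (iii) $\Rightarrow$ (i). The implication (i) $\Rightarrow$ (ii) is nearly definitional: a quaternionic structure $\rho_*(\g{s})$ contains complex structures by definition, and since $\g{s}\cong\g{su}(2)$ is simple and $\rho_*(\g{s})$ is $3$-dimensional, the restriction $\rho_*|_\g{s}$ is a Lie algebra isomorphism, so any preimage $X$ of a complex structure $J_1\in\rho_*(\g{s})$ satisfies $X\in\g{s}\cap\cal{J}$.

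For (ii) $\Rightarrow$ (iii), let $S\subset K$ be the compact connected subgroup with Lie algebra $\g{s}$. Endow $\g{s}$ with an $\Ad(S)$-invariant inner product; under the adjoint action, $\g{s}\cong\R^3$ becomes the standard representation of $S/Z(S)\cong\SO{3}$, which is transitive on spheres. Given $X\in\g{s}\cap\cal{J}$ and any one-dimensional subspace $\ell\subset\g{s}$, pick a unit vector $Y\in\ell$ and find $s\in S$ with $\Ad(s)X=\|X\|Y$. By Fact~\ref{fact:AdK_invariant}, $\|X\|Y\in\cal{J}\cap\ell$, hence this intersection is non-empty. Uniqueness up to sign follows because, for any element $X_\ell\in\cal{J}\cap\ell$ and any $c\in\R$, the condition that $\rho_*(cX_\ell)=c\rho_*(X_\ell)$ be a complex structure forces $c^2\rho_*(X_\ell)^2=-\id$, i.e.\ $c=\pm 1$.

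For the main implication (iii) $\Rightarrow$ (i), fix an orthonormal basis $e_1,e_2,e_3$ of $\g{s}$ with $\g{su}(2)$-commutation relations $[e_i,e_{i+1}]=c\,e_{i+2}$ for some nonzero $c$ (indices modulo $3$). By (iii) and the transitivity argument of the previous paragraph, there exists $\lambda>0$ such that $X_i:=\lambda e_i\in\cal{J}$ for all $i$; set $J_i:=\rho_*(X_i)$, so $J_i^2=-\id$. Applying (iii) to the one-dimensional subspace $\R(e_i+e_{i+1})$ yields that $\tfrac{\lambda}{\sqrt{2}}(e_i+e_{i+1})\in\cal{J}$ (using the transitivity argument on the unit vector $\tfrac{1}{\sqrt{2}}(e_i+e_{i+1})$); hence $\tfrac{1}{\sqrt{2}}(J_i+J_{i+1})$ is a complex structure, and squaring yields the anticommutativity $J_iJ_{i+1}+J_{i+1}J_i=0$. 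A direct bracket computation then gives $J_iJ_{i+1}=\tfrac{c\lambda}{2}J_{i+2}$, and squaring both sides forces $(\tfrac{c\lambda}{2})^2=1$. After possibly replacing each $J_i$ by $-J_i$ to fix signs coherently, the triple $(J_1,J_2,J_3)$ satisfies the quaternionic relations, so $\rho_*(\g{s})$ is a quaternionic structure. The fact that it preserves $\cal{F}$ is automatic: each $J_i\in\rho_*(\cal{J})$ has its Hopf circles contained in leaves of $\cal{F}$, so the full $\SU{2}$-orbits, generated by the one-parameter subgroups $\exp(tJ_i)$, are also contained in leaves.

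The main obstacle lies in (iii) $\Rightarrow$ (i) — specifically, the bookkeeping of the scalar $\lambda$ and the sign choices needed to verify the quaternionic relations exactly (rather than merely up to sign). This encodes a representation-theoretic fact: a $3$-dimensional subalgebra of $\g{so}(V)$ isomorphic to $\g{su}(2)$ containing an element whose square is $-\id$ forces $V$ to decompose as copies of the standard $4$-real-dimensional quaternionic $\SU{2}$-representation, whence the subalgebra acts as the imaginary quaternions. The residual sign ambiguity in $(\tfrac{c\lambda}{2})^2=1$ is absorbed by the standard trick of rechoosing orientations of the basis vectors of $\g{s}$.
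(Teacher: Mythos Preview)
The paper does not include a proof of this Fact; it is one of several results recalled without argument in \S\ref{subsec:approach} from the references \cite{Do:tams} and \cite{DG:tohoku}. Your proof is correct and self-contained.

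The cycle (i)$\Rightarrow$(ii)$\Rightarrow$(iii)$\Rightarrow$(i) works as written: the $\Ad$-transitivity of $S$ on the unit sphere of~$\g{s}$ combined with Fact~\ref{fact:AdK_invariant} yields both existence and norm-rigidity in (iii), and the squaring/anticommutation computation in (iii)$\Rightarrow$(i) is clean. One minor remark on the sign fix at the end: since $\tfrac{c\lambda}{2}$ is the \emph{same} scalar in all three cyclic identities $J_iJ_{i+1}=\tfrac{c\lambda}{2}J_{i+2}$, the global replacement $J_i\mapsto -J_i$ (equivalently, reversing the orientation of the ordered basis $(e_1,e_2,e_3)$) does resolve the case $\tfrac{c\lambda}{2}=-1$ simultaneously for all~$i$; it would be worth stating this uniformity explicitly. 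Your closing argument that $\rho_*(\g{s})$ preserves~$\cal{F}$ is fine, though you could equally well just invoke Fact~\ref{fact:preserve} directly.
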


Once the complex/quaternionic structures that preserve a given foliation $\cal{F}$ of $\sph(V)$ are known, the problem that arises is to study the congruence of the corresponding projected foliations of $\K\P^n$.

Given two complex structures $J_1$ and $J_2$ (or quaternionic structures $\g{q}_1$ and $\g{q}_2$) on $V$, they induce (formally) two complex (or quaternionic) projective spaces $\K \P^n_1$ and $\K \P^n_2$, and two associated Hopf fibrations $\pi_i\colon \sph(V)\to\K\P_i^n$, $i=1,2$. Each one of these is given by the Riemannian submersion corresponding to the quotient of $\sph(V)$ by the action of the $\U{1}$-subgroup of $\SO{V}$ with Lie algebra $\R J_i$, $i=1,2$ (or of the $\SU{2}$-subgroup of $\SO{V}$ with Lie algebra $\g{q}_i$, $i=1,2$). Of course, $\K\P^n_1$ and $\K\P^n_2$ are isometric, and it makes sense to talk about congruence of a submanifold in $\K\P^n_1$ and a submanifold in $\K\P^n_2$. Since all isometries between $\K\P^n_1$ and $\K\P^n_2$ descend from orthogonal transformations $A\in\OG{V}$ such that $AJ_1A^{-1}=\pm J_2$ if $\K=\C$, or such that $A\g{q}_1 A^{-1}= \g{q}_2$ if $\K=\H$, we deduce the following:
\begin{fact}
	Let $J_1$, $J_2$ be complex structures (resp.\ $\g{q}_1$, $\g{q}_2$ be quaternionic structures) preserving $\cal{F}$. Then $\pi_1(\cal{F})$ and $\pi_2(\cal{F})$ are congruent foliations if and only if there exists $A\in\Aut(\cal{F})=\{A\in\OG{V}:A\text{ maps leaves of }\cal{F}\text{ to leaves of }\cal{F}\}$ such that $AJ_1A^{-1}\in\{\pm J_2\}$ (resp.\ $A\g{q}_1 A^{-1}= \g{q}_2$).
\end{fact}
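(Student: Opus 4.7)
The plan is to derive the statement as a short diagram-chase from the facts recalled in the paragraph immediately preceding it, namely that the isometries $\bar{A}\colon\K\P^n_1\to\K\P^n_2$ are exactly the maps descended from those $A\in\OG{V}$ satisfying $AJ_1A^{-1}=\pm J_2$ (resp.\ $A\g{q}_1A^{-1}=\g{q}_2$); such an $A$ normalizes the defining $\U{1}$- (resp.\ $\SU{2}$-) action, and thus is characterized by the intertwining relation $\pi_2\circ A=\bar{A}\circ\pi_1$. The second ingredient I will use is the saturation identity $\cal{F}=\pi_i^{-1}(\pi_i(\cal{F}))$, for $i=1,2$, which holds precisely because $J_i$ (resp.\ $\g{q}_i$) preserves $\cal{F}$.

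For the forward implication, I would assume that $\pi_1(\cal{F})$ and $\pi_2(\cal{F})$ are congruent foliations of $\K\P^n$, pick an isometry $\bar{A}\colon\K\P^n_1\to\K\P^n_2$ with $\bar{A}(\pi_1(\cal{F}))=\pi_2(\cal{F})$, and lift it to some $A\in\OG{V}$ with the required conjugation property. Combining the intertwining $\pi_2\circ A=\bar{A}\circ\pi_1$ with the two saturation identities yields
\[
A(\cal{F})=A\bigl(\pi_1^{-1}(\pi_1(\cal{F}))\bigr)=\pi_2^{-1}\bigl(\bar{A}(\pi_1(\cal{F}))\bigr)=\pi_2^{-1}(\pi_2(\cal{F}))=\cal{F},
\]
so that $A\in\Aut(\cal{F})$.

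For the converse, starting from $A\in\Aut(\cal{F})$ satisfying the algebraic condition, the same correspondence produces an isometry $\bar{A}\colon\K\P^n_1\to\K\P^n_2$ with $\pi_2\circ A=\bar{A}\circ\pi_1$. Pushing the equality $A(\cal{F})=\cal{F}$ down through $\pi_2$ then gives $\bar{A}(\pi_1(\cal{F}))=\pi_2(A(\cal{F}))=\pi_2(\cal{F})$, which is the desired congruence.

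I do not anticipate any serious obstacle, since the nontrivial input, the description of the isometries of $\K\P^n$ as exactly those maps coming from linear transformations of $V$ that normalize the Hopf structure, is classical and was explicitly recalled just above the statement. The only subtlety worth flagging is the $\pm$ sign in the complex case, which reflects the two connected components of $\mathrm{Isom}(\C\P^n)$: holomorphic isometries lift to $A$ with $AJ_1A^{-1}=J_2$, while antiholomorphic ones lift to $A$ with $AJ_1A^{-1}=-J_2$. No such sign appears in the quaternionic setting because the condition is imposed on the whole $3$-dimensional subalgebra $\g{q}$ rather than on a single element.
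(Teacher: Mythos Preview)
Your argument is correct and matches the paper's own approach. The paper does not give a detailed proof of this Fact; it simply states that, since every isometry between $\K\P^n_1$ and $\K\P^n_2$ descends from an $A\in\OG{V}$ with $AJ_1A^{-1}=\pm J_2$ (resp.\ $A\g{q}_1A^{-1}=\g{q}_2$), ``we deduce the following'', and your diagram chase via the intertwining relation $\pi_2\circ A=\bar A\circ\pi_1$ and the saturation identities $\cal{F}=\pi_i^{-1}(\pi_i(\cal{F}))$ is precisely the deduction the paper leaves implicit.
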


In view of this, we introduce an equivalence relation $\sim$ in the set $\cal{J}$ (or $\cal{S}$) that parameterizes via $\rho_*$ the complex (or quaternionic) structures on $V$ that preserve $\cal{F}$. Thus, given $X_1$, $X_2\in\cal{J}$ we declare $X_1\sim X_2$ if the associated projections $\pi_1(\cal{F})$ and $\pi_2(\cal{F})$ are congruent, or equivalently, if there exists $A\in\Aut(\cal{F})$ such that $A\rho_*(X_1)A^{-1}\in\{\pm\rho_*(X_2)\}$. Similarly, given $\g{s}_1$, $\g{s}_2\in\cal{S}$ we declare $\g{s}_1\sim \g{s}_2$ if $\pi_1(\cal{F})$ and $\pi_2(\cal{F})$ are congruent, or equivalently, if there exists $A\in\Aut(\cal{F})$ such that $A\rho_*(\g{s}_1)A^{-1}=\rho_*(\g{s}_2)$. Thus, the maximal numbers of complex/quaternionic structures that preserve $\cal{F}$ and yield mutually non-congruent projections on the corresponding complex/quaternionic projective spaces (see Problem (P) in the introduction) are nothing but the cardinals of the quotient sets $\cal{J}/\!\sim$ and $\cal{S}/\!\sim$:
\begin{equation}\label{eq:N_CH}
	N_\C(\cal{F})=|\cal{J}/\!\sim| \qquad\text{and}\qquad N_\H(\cal{F})=|\cal{S}/\!\sim|.
\end{equation}
These numbers coincide with the maximal numbers of mutually non-congruent polar foliations of a complex or quaternionic projective space that pull back to a foliation congruent to $\cal{F}$ under a fixed Hopf fibration.

Any automorphism $A\in\Aut(\cal{F})\subset\OG{V}$ can be extended to a transformation of $\g{k}\oplus V$ in the following way. First, we associate to $A$ the Lie algebra automorphism $\varphi_A\in\Aut(\g{k})$ such that $A\rho_*(X)A^{-1}=\rho_*(\varphi_A(X))$ for all $X\in\g{k}$. Second, we define $\Phi_A\in \GL{\g{k}\oplus V}$ by $\Phi_A\vert_{\g{k}}=\varphi_A$ and $\Phi_A\vert_V=A$. We denote by $\Aut(\g{k},\cal{F})$ the compact group of all transformations $\Phi_A$, for any $A\in\Aut(\cal{F})$. Then, we have:
\begin{fact}\label{fact:aut_k,F}
	$\cal{J}$ and $\cal{S}$ are invariant under $\Aut(\g{k},\cal{F})$. Moreover, if $X_1$, $X_2\in\cal{J}$ (resp.\ $\g{s}_1$, $\g{s}_2\in\cal{S}$), then $X_1\sim X_2$ (resp.\ $\g{s}_1\sim\g{s}_2$) if and only if there exists $\Phi\in\Aut(\g{k},\cal{F})$ such that $\Phi(X_1)\in\{\pm X_2\}$ (resp.\ $\Phi(\g{s}_1)=\g{s}_2$).
\end{fact}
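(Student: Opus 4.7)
My plan is to unpack the definitions and verify each claim by direct calculation, using the bijection $A\mapsto\Phi_A$ between $\Aut(\cal{F})$ and $\Aut(\g{k},\cal{F})$ together with the injectivity of $\rho_*$. The starting observation is that, for every $A\in\Aut(\cal{F})$, the Lie algebra automorphism $\varphi_A\in\Aut(\g{k})$ already exists by construction: since $\rho(K)$ is the maximal connected subgroup of $\SO{V}$ preserving every leaf of $\cal{F}$, and since $A$ permutes leaves, conjugation by $A$ maps $\rho(K)$ into itself, so $A\rho_*(\g{k})A^{-1}=\rho_*(\g{k})$; effectiveness of $\rho$ (equivalently, injectivity of $\rho_*$) then gives both existence and uniqueness of $\varphi_A$.

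For the invariance of $\cal{J}$, I would take $X\in\cal{J}$ and $\Phi_A\in\Aut(\g{k},\cal{F})$, and check that $\rho_*(\varphi_A(X))=A\rho_*(X)A^{-1}$ is a complex structure preserving $\cal{F}$. Its square is $A\rho_*(X)^2A^{-1}=-\id$, and the one-parameter group it generates is $Ae^{t\rho_*(X)}A^{-1}$: for any leaf $L$ of $\cal{F}$, the set $A^{-1}L$ is itself a leaf, is preserved by $e^{t\rho_*(X)}$, and is then mapped back to $L$ by $A$. For the invariance of $\cal{S}$, the same flow argument shows that $A\rho_*(\g{s})A^{-1}$ preserves $\cal{F}$, while the defining relations $J_iJ_{i+1}=J_{i+2}$ and $J_i^2=-\id$ of a quaternionic structure are plainly preserved under conjugation by an orthogonal transformation; combined with the fact that $\varphi_A$ is a Lie algebra automorphism of $\g{k}$, this yields $\varphi_A(\g{s})\in\cal{S}$.

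For the characterization of $\sim$, I would simply translate between the two equivalent definitions using injectivity of $\rho_*$. If $\Phi=\Phi_A\in\Aut(\g{k},\cal{F})$ satisfies $\Phi(X_1)\in\{\pm X_2\}$, then $A\rho_*(X_1)A^{-1}=\rho_*(\varphi_A(X_1))\in\{\pm\rho_*(X_2)\}$, which is exactly $X_1\sim X_2$. Conversely, $X_1\sim X_2$ produces $A\in\Aut(\cal{F})$ with $\rho_*(\varphi_A(X_1))=A\rho_*(X_1)A^{-1}\in\{\pm\rho_*(X_2)\}=\rho_*(\{\pm X_2\})$; injectivity of $\rho_*$ upgrades this to $\varphi_A(X_1)\in\{\pm X_2\}$, and $\Phi_A$ is the desired element. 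The quaternionic case is entirely analogous, with the simplification that the sign ambiguity disappears (since quaternionic structures are $3$-dimensional subspaces rather than individual vectors): one has $A\rho_*(\g{s}_1)A^{-1}=\rho_*(\g{s}_2)$ if and only if $\varphi_A(\g{s}_1)=\g{s}_2$.

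The only step that is not purely formal is the initial claim that conjugation by $A\in\Aut(\cal{F})$ preserves $\rho_*(\g{k})$; this is where one genuinely uses the hypothesis on $A$, together with the maximality built into the choice of $K$. Once that has been established, the rest of the fact reduces to chasing the definitions through the map $A\mapsto\Phi_A$ and applying the injectivity of $\rho_*$ at the relevant places.
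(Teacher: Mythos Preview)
Your proof is correct. The paper does not actually supply a proof of this fact: it is one of several results recalled from~\cite{Do:tams} and~\cite{DG:tohoku} as background for the approach in~\S\ref{subsec:approach}, and is simply stated. Your argument correctly fills in the omitted verification by unpacking the definitions, and the key substantive step---that conjugation by $A\in\Aut(\cal{F})$ preserves $\rho(K)$, hence $\rho_*(\g{k})$, by maximality of $\rho(K)$---is exactly the right one. One small streamlining: once you know $\varphi_A(X)\in\g{k}$ and that $\rho_*(\varphi_A(X))$ squares to $-\id$, membership in $\cal{J}$ follows immediately from Fact~\ref{fact:preserve}, so the separate flow argument for leaf preservation is redundant (though not incorrect).
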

The study of the congruence problem amounts to the analysis of the action of $\Aut(\g{k},\cal{F})$ on $\cal{J}$ or $\cal{S}$. As for the determination of $\cal{J}$ or $\cal{S}$, this can be simplified by restricting to a maximal abelian subalgebra $\g{t}$ of $\g{k}$, or to a closed Weyl chamber $\bar{C}$ in $\g{t}$ determined by a choice of simple roots for the pair $(\g{k},\g{t})$; see below Fact~\ref{fact:aut_Jt} for the precise definition of $\bar{C}$. We denote $\cal{S}_\g{t}=\{\g{s}\in\cal{S}:\g{s}\cap\g{t}\neq 0\}$ and $\cal{S}_{\bar{C}}=\{\g{s}\in\cal{S}:\g{s}\cap\bar{C}\neq 0\}$. Then, it follows essentially from Fact~\ref{fact:AdK_invariant} that:
\begin{fact}\label{fact:equivalences}
	We have $\cal{J}/\!\sim\,\cong\cal{J}\cap\g{t}/\!\sim\, \cal{J}\cap\bar{C}/\!\sim$ and $\cal{S}/\!\sim\,\cong\cal{S}_\g{t}/\!\sim\,\cong\cal{S}_{\bar{C}}/\!\sim$.
\end{fact}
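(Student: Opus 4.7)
The plan is to deduce Fact~\ref{fact:equivalences} as a formal consequence of Facts~\ref{fact:AdK_invariant} and~\ref{fact:aut_k,F}, combined with two standard results from the structure theory of compact Lie algebras. The crucial preliminary observation is that $\Ad(K)$ embeds into $\Aut(\g{k},\cal{F})$: by definition of $K$ one has $\rho(K)\subset\Aut(\cal{F})$, and for $A=\rho(k)$ the associated Lie algebra automorphism $\varphi_A$ coincides with $\Ad(k)$. Fact~\ref{fact:aut_k,F} then tells us that $\Ad(K)$-conjugate elements of $\cal{J}$ (resp.\ of $\cal{S}$) are automatically $\sim$-equivalent, and likewise for elements conjugate under the Weyl group $W=N_K(\g{t})/T$, since $W$ acts on $\g{t}$ via $\Ad(N_K(\g{t}))\subset\Ad(K)$.

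To obtain $\cal{J}/\!\sim\,\cong\,\cal{J}\cap\g{t}/\!\sim$, I would argue as follows. Injectivity of the natural map $\cal{J}\cap\g{t}/\!\sim\,\to\,\cal{J}/\!\sim$ is automatic, because the equivalence on the left is by definition the restriction of the one on the right. Surjectivity is a direct application of the preliminary observation: every element of $\g{k}$ is $\Ad(K)$-conjugate to an element of $\g{t}$, and $\cal{J}$ is $\Ad(K)$-invariant (Fact~\ref{fact:AdK_invariant}), so every $X\in\cal{J}$ is $\sim$-equivalent to some $X'\in\cal{J}\cap\g{t}$. The further refinement $\cal{J}\cap\g{t}/\!\sim\,\cong\,\cal{J}\cap\bar{C}/\!\sim$ is obtained by exactly the same argument with the Weyl group in place of $\Ad(K)$, using that every $W$-orbit on $\g{t}$ meets $\bar{C}$.

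For the quaternionic statement, my approach is to bootstrap from the complex case via Fact~\ref{fact:su_2_vs_complex}. Given $\g{s}\in\cal{S}$, that fact supplies some $X\in\g{s}\cap\cal{J}$; choosing $k\in K$ with $\Ad(k)X\in\g{t}$ and invoking the $\Ad(K)$-invariance of $\cal{S}$, I obtain $\g{s}':=\Ad(k)\g{s}\in\cal{S}$ satisfying $\g{s}'\cap\g{t}\ni\Ad(k)X\neq 0$, so $\g{s}'\in\cal{S}_\g{t}$ and $\g{s}\sim\g{s}'$. The reduction from $\cal{S}_\g{t}$ to $\cal{S}_{\bar{C}}$ is analogous, this time using the Weyl group to move a chosen nonzero element of $\g{s}\cap\g{t}$ into $\bar{C}$ and transporting $\g{s}$ along. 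Injectivity of both induced maps is again automatic. I do not foresee a real obstacle: the whole statement is bookkeeping around the identification $\varphi_{\rho(k)}=\Ad(k)$, which places $\Ad(K)$ and the Weyl group inside $\Aut(\g{k},\cal{F})$, so that the standard conjugacy theorems for compact Lie algebras are automatically subsumed by~$\sim$.
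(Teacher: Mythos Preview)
Your proposal is correct and follows essentially the same approach as the paper, which merely asserts that Fact~\ref{fact:equivalences} ``follows essentially from Fact~\ref{fact:AdK_invariant}'' without further detail. You have supplied precisely the missing details: the identification $\varphi_{\rho(k)}=\Ad(k)$ (so that $\Ad(K)$-conjugacy, and hence Weyl-conjugacy, implies $\sim$-equivalence), combined with the standard conjugacy of maximal abelian subalgebras and the fact that $\bar{C}$ is a fundamental domain for the Weyl group; your use of Fact~\ref{fact:aut_k,F} is only a convenient repackaging of the definition of~$\sim$, not a genuinely different ingredient.
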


Here, we also denote by $\sim$ the restriction of the equivalence relation $\sim$ (initially defined in $\cal{J}$ and $\cal{S}$) to the subsets $\cal{J}\cap\g{t}$, $\cal{J}\cap\bar{C}$ of $\cal{J}$, and $\cal{S}_\g{t}$ and $\cal{S}_{\bar{C}}$ of $\cal{S}$.
Moreover, determining $\cal{S}_{\bar{C}}/\!\sim$ can be reduced to determining $\cal{J}\cap\bar{C}/\!\sim$ and deciding which representatives $T$ of the classes $[T]$ in $\cal{J}\cap\bar{C}/\!\sim$ belong to $\g{su}_2$-subalgebras of $\g{k}$. This follows from Fact~\ref{fact:su_2_vs_complex} and the following:
\begin{fact}\label{fact:injective}
	There is an injective map $\iota\colon \cal{S}_{\bar{C}}/\!\sim\to \cal{J}\cap\bar{C}/\!\sim$ that sends a class $[\g{s}]$ with $\g{s}\in\cal{S}_{\bar{C}}$ to the class $[T]$, where $T$ is the unique (possibly up to sign) element in $\g{s}\cap\cal{J}\cap\bar{C}$.
\end{fact}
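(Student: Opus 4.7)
The plan is to establish three things in succession: (a) for each $\g{s}\in\cal{S}_{\bar{C}}$, the set $\g{s}\cap\cal{J}\cap\bar{C}$ consists of a single point up to sign, so the assignment $\g{s}\mapsto T$ makes sense; (b) this assignment descends to a well-defined map $\iota$ on $\sim$-classes; (c) $\iota$ is injective. Parts (a) and (b) will be quick consequences of the preceding facts, while (c) will be the main obstacle.

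For (a), I would observe that $\g{s}\cap\g{t}$ is an abelian subalgebra of $\g{s}\cong\g{su}(2)$, hence of dimension at most one; since $\bar{C}\subset\g{t}$ and $\g{s}\cap\bar{C}\neq 0$, this intersection is in fact a line $\ell$. Fact~\ref{fact:su_2_vs_complex}(iii) then yields $\ell\cap\cal{J}=\{\pm X\}$, and since $\ell$ meets the cone $\bar{C}$ in a ray, at least one of $\pm X$ lies in $\bar{C}$; this will be $T$.

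For (b), I would take $\g{s}_1\sim\g{s}_2$ realized by $\Phi\in\Aut(\g{k},\cal{F})$ with $\Phi(\g{s}_1)=\g{s}_2$. A priori $\Phi(T_1)\in\g{s}_2\cap\cal{J}$ is not equal to $\pm T_2$; the key idea is to correct $\Phi$ by an element of the connected subgroup $S_2\subset K$ with Lie algebra $\g{s}_2$. By Fact~\ref{fact:aut_k,F} together with the inclusion $\rho(K)\subset\Aut(\cal{F})$, the adjoint action of $S_2$ sits inside $\Aut(\g{k},\cal{F})$; this action on $\g{s}_2$ is the standard transitive $\SO{3}$-action on unit vectors, so a suitable $s\in S_2$ will send $\Phi(T_1)$ to $\pm T_2$, witnessing $T_1\sim T_2$ in $\cal{J}\cap\bar{C}$.

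For (c), if $T_1\sim T_2$ is realized by some $\Phi\in\Aut(\g{k},\cal{F})$, then $\g{s}_1':=\Phi(\g{s}_1)\in\cal{S}$ contains $\pm T_2$, hence $T_2$, placing it in $\cal{S}_{\bar{C}}$. The injectivity of $\iota$ is therefore equivalent to the statement that any two members of $\cal{S}_{\bar{C}}$ sharing a common $T$ are $\sim$-equivalent. This will be the hard part. My approach would be to write any such $\g{s}=\R T\oplus W$ with $W$ a $2$-plane in the $(-4)$-eigenspace of $\ad_T^2$ acting on $\g{k}$, and then argue that the admissible $W$'s---constrained by the stringent condition of Fact~\ref{fact:su_2_vs_complex}(iii)---are permuted transitively by $\Ad\bigl(Z_K(T)\bigr)$, which lies in $\Aut(\g{k},\cal{F})$ and fixes $T$. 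On the representation side this corresponds to the transitivity of the action of the subgroup of $\Aut(\cal{F})$ commuting with $\rho_*(T)$ on the $2$-sphere of quaternionic structures through $\rho_*(T)$. For the FKM examples relevant to this paper (where $K^0$ is $\Spin{9}\cdot\U{1}$ or $\Spin{9}$), this transitivity can be verified directly from the explicit weight decomposition of $\rho_*^\C$ and the known structure of the spin representation.
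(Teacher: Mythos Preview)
Your argument for parts (a) and (b) is correct and essentially what the paper has in mind when it says well-definedness ``essentially follows from Fact~\ref{fact:su_2_vs_complex}(iii).'' One minor remark on (b): the reason $\Phi(T_1)$ and $T_2$ lie on the \emph{same} $\Ad(S_2)$-orbit (and not merely on parallel spheres) is that the set $\g{s}_2\cap\cal{J}$ is itself a single $\Ad(S_2)$-orbit; this follows because the condition ``$\rho_*(X)$ is a complex structure'' is $\Ad(S_2)$-invariant and, by Fact~\ref{fact:su_2_vs_complex}(iii), meets every line through the origin in exactly two antipodal points.

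For part (c), your approach diverges from the paper's. The paper does not argue case by case: it simply invokes Dynkin's uniqueness theorem (\cite[Theorem~8.1]{dynkin1}, see also \cite[Theorem~7]{Vogan}), which asserts that any two $\g{su}(2)$-subalgebras of a compact Lie algebra $\g{k}$ containing a given one-dimensional subspace of $\g{t}$ are conjugate by an element of $\Ad(K)$ (indeed of $\Ad(Z_K(T))$). This immediately gives $\g{s}_1'\sim\g{s}_2$ whenever both contain $T_2$, with no computation. Your proposed route---decomposing $\g{s}=\R T\oplus W$ and checking transitivity of $\Ad(Z_K(T))$ on admissible $W$'s for $K^0\in\{\Spin{9}\cdot\U{1},\Spin{9}\}$---would amount to re-deriving this theorem in the specific cases at hand. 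It can be done, but it is more laborious and loses the generality that makes Fact~\ref{fact:injective} reusable. Note also that the ``stringent condition of Fact~\ref{fact:su_2_vs_complex}(iii)'' you mention is automatic here: once $\g{s}$ is an $\g{su}(2)$-subalgebra containing $T\in\cal{J}$, Fact~\ref{fact:su_2_vs_complex} already forces $\g{s}\in\cal{S}$, so the only genuine constraint on $W$ is that $\R T\oplus W$ close up to an $\g{su}(2)$.
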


The fact that $\iota$ is well-defined essentially follows from Fact~\ref{fact:su_2_vs_complex} (iii), whereas the injectivity is a consequence of Dynkin's uniqueness result (up to conjugation) of $\g{su}_2$-subalgebras of a compact Lie algebra $\g{k}$ containing a given one-dimensional subspace of $\g{t}$ (see~\cite[Theorem~8.1]{dynkin1} or~\cite[Theorem~7]{Vogan}).

In order to complete the description of our theoretical approach, we have to explain how to deal with the complex case, namely, how to determine $\cal{J}/\!\sim\cong\cal{J}\cap\bar{C}/\!\sim$. For this, one needs to introduce some additional ingredients.

Let us denote:
\begin{align*}
	\Aut_\cal{F}(\Delta_\g{k},\Delta_V)&=\{\varphi\vert_\g{t}:\varphi\in\Aut(\g{k},\cal{F}),\,\varphi(\g{t})=\g{t}\},
	\\
	\Aut_\cal{F}^\pm(\Delta_\g{k},\Delta_V)&=\text{subgroup of } \GL{\g{t}} \text{ generated by }\Aut_\cal{F}(\Delta_\g{k},\Delta_V)\text{ and } -\id_\g{t}.
\end{align*}
This notation is justified by the fact the restrictions to $\g{t}$ of elements of $\Aut(\g{k},\cal{F})$ belong to the group $\Aut(\Delta_\g{k},\Delta_V)$ of transformations of $\g{t}$ that leave the set of coroots $\{H_\alpha:\alpha\in\Delta_\g{k}\}$ and the set of coweights $\{H_\lambda:\lambda\in\Delta_V\}$ invariant. Here, by $\Delta_\g{k}$ we denote the set of roots $\alpha\in(\g{t}^\C)^*$ of the pair $(\g{k}^\C,\g{t}^\C)$ (note that any such root vanishes on the center $Z(\g{k}^\C)$), by $\Delta_V$ we denote the set of weights of the representation $\rho_*^\C$, and for any $\lambda\in\g{t}^*$ we define $H_\lambda\in\g{t}$ by $\langle H_\lambda,T\rangle=\lambda(T)$ for every $T\in\g{t}$, where $\langle\cdot,\cdot\rangle$ is an $\Aut(\g{k},\cal{F})$-invariant inner product on $\g{k}$. Moreover, since $\rho_*^\C$ is of real type, hence self-dual, $-\id_\g{t}$ leaves the set of coweights invariant.  Hence, $\Aut_\cal{F}^\pm(\Delta_\g{k},\Delta_V)\subset\Aut(\Delta_\g{k},\Delta_V)$.
\begin{fact}\label{fact:aut_Jt}
	If $T_1$, $T_2\in\cal{J}\cap\g{t}$, then $T_1\sim T_2$ if and only if $\varphi T_1=T_2$ for some $\varphi\in \Aut_\cal{F}^\pm(\Delta_\g{k},\Delta_V)$.
\end{fact}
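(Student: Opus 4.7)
The plan is to prove both implications by converting between automorphisms $\Phi\in\Aut(\g{k},\cal{F})$ and their restrictions to $\g{t}$. Invoking Fact~\ref{fact:aut_k,F}, the relation $T_1\sim T_2$ translates to the existence of $\Phi\in\Aut(\g{k},\cal{F})$ with $\Phi(T_1)\in\{\pm T_2\}$, while by construction the elements of $\Aut_\cal{F}^\pm(\Delta_\g{k},\Delta_V)$ are precisely the maps $\pm\psi$, where $\psi$ is the restriction to $\g{t}$ of some $\Phi\in\Aut(\g{k},\cal{F})$ with $\Phi(\g{t})=\g{t}$.

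The sufficiency direction is formal: given $\varphi=\eta\,\psi\in\Aut_\cal{F}^\pm(\Delta_\g{k},\Delta_V)$ with $\eta\in\{\pm1\}$, $\psi=\Phi|_\g{t}$ for some $\Phi\in\Aut(\g{k},\cal{F})$ preserving $\g{t}$, and $\varphi T_1=T_2$, one immediately gets $\Phi(T_1)=\eta T_2\in\{\pm T_2\}$, whence $T_1\sim T_2$.

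The content lies in the converse. Starting from $\Phi\in\Aut(\g{k},\cal{F})$ with $\Phi(T_1)=\epsilon T_2$ for some $\epsilon\in\{\pm1\}$, the obstruction is that $\Phi$ need not send $\g{t}$ to $\g{t}$. The plan is to post-compose $\Phi$ with a suitable inner automorphism $\Ad(g)$ that fixes $T_2$ and drags $\Phi(\g{t})$ back onto $\g{t}$. The key observation is that both $\g{t}$ and $\Phi(\g{t})$ are maximal abelian subalgebras of $\g{k}$ containing $T_2$ (the latter since $\Phi(\g{t})\ni\epsilon T_2$), and therefore are maximal abelian subalgebras of the centralizer $\g{z}_\g{k}(T_2)$, which is the Lie algebra of the compact group $Z_K(T_2)$. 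By the standard conjugacy of maximal tori of the connected compact group $Z_K(T_2)^0$, one can pick $g\in Z_K(T_2)^0$ with $\Ad(g)\Phi(\g{t})=\g{t}$. Since $\rho(K)\subset\Aut(\cal{F})$ (every element of $\rho(K)$ preserves each leaf of $\cal{F}$, so \emph{a fortiori} maps leaves to leaves), the composition $\Phi':=\Ad(g)\circ\Phi$ still belongs to $\Aut(\g{k},\cal{F})$, now preserves $\g{t}$, and satisfies $\Phi'(T_1)=\Ad(g)(\epsilon T_2)=\epsilon T_2$ because $g$ centralizes $T_2$. Setting $\varphi:=\epsilon\,\Phi'|_\g{t}\in\Aut_\cal{F}^\pm(\Delta_\g{k},\Delta_V)$ then yields $\varphi T_1=T_2$.

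The main technical input, and essentially the only nontrivial step, is the centralizer conjugacy used to move $\Phi(\g{t})$ back to $\g{t}$ without disturbing $T_2$; this is a classical fact about compact Lie groups. The remaining assertions (that restrictions of elements of $\Aut(\g{k},\cal{F})$ to $\g{t}$ send coroots to coroots and coweights to coweights, and that $-\id_\g{t}$ preserves the set of coweights thanks to the real type of $\rho_*^\C$) are already recorded in the discussion preceding the statement and ensure that the constructed $\varphi$ indeed lies in $\Aut_\cal{F}^\pm(\Delta_\g{k},\Delta_V)$.
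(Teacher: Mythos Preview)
Your proof is correct. The paper does not actually prove this fact: it is one of several ``Facts'' in \S\ref{subsec:approach} that are recalled without proof from the earlier works~\cite{Do:tams} and~\cite{DG:tohoku}. Your argument---reducing via Fact~\ref{fact:aut_k,F} to finding $\Phi\in\Aut(\g{k},\cal{F})$ with $\Phi(T_1)\in\{\pm T_2\}$, and then post-composing with $\Phi_{\rho(g)}$ for a suitable $g\in Z_K(T_2)^0$ to force $\Phi(\g{t})$ back onto $\g{t}$ via conjugacy of maximal tori in the centralizer---is exactly the natural approach and is the one underlying those references. One minor notational point: when you write $\Phi':=\Ad(g)\circ\Phi$, you are implicitly identifying $\Ad(g)$ on $\g{k}$ with the element $\Phi_{\rho(g)}\in\Aut(\g{k},\cal{F})$ (which acts as $\Ad(g)$ on $\g{k}$ and as $\rho(g)$ on $V$); this is harmless but worth making explicit.
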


Fix a set $\Pi_\g{k}=\{\alpha_1,\dots,\alpha_r\}$ of simple roots for the root system $\Delta_\g{k}$; here, $r$ is the rank of the semisimple part $[\g{k},\g{k}]$ of the compact Lie algebra $\g{k}=[\g{k},\g{k}]\oplus Z(\g{k})$. The subset $\bar{C}$ of $\g{t}$ defined by the inequalities $\alpha_i\geq 0$ for every $i=1,\dots, r$ constitutes a fundamental domain for the action of the Weyl group on $\g{t}$. It is the orthogonal product of $Z(\g{k})$ and a closed Weyl chamber in $\g{t}\cap[\g{k},\g{k}]$. We will denote
\begin{align*}
	\Out(\Delta_\g{k},\Delta_V)&=\{\varphi\in\Aut(\Delta_\g{k},\Delta_V):\varphi(\bar{C})=\bar{C}\},
	\\
	\Out^\pm_\cal{F}(\Delta_\g{k},\Delta_V)&=\{\varphi\in\Aut^\pm_\cal{F}(\Delta_\g{k},\Delta_V):\varphi(\bar{C})=\bar{C}\}.
\end{align*}
Thus, one obtains the following result, which is analogous to Fact~\ref{fact:aut_Jt}:
\begin{fact}\label{fact:J_Out}
		If $T_1$, $T_2\in\cal{J}\cap\bar{C}$, then $T_1\sim T_2$ if and only if $\varphi T_1=T_2$ for some $\varphi\in \Out_\cal{F}^\pm(\Delta_\g{k},\Delta_V)$.
\end{fact}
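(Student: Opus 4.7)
The plan is to reduce Fact~\ref{fact:J_Out} to Fact~\ref{fact:aut_Jt} using the standard fact that the Weyl group $W$ of $(\g{k},\g{t})$ acts simply transitively on the Weyl chambers of $\g{t}\cap[\g{k},\g{k}]$, combined with the observation that $W\subset\Aut_\cal{F}(\Delta_\g{k},\Delta_V)$. For this latter inclusion, I would argue as follows: since $\rho(K)$ preserves every leaf of $\cal{F}$, one has $\rho(N_K(\g{t}))\subset\Aut(\cal{F})$, and by the very definition of $\varphi_A$, for $g\in K$ the induced automorphism $\varphi_{\rho(g)}$ equals $\Ad(g)$. Restricting to $\g{t}$, the class of $g\in N_K(\g{t})$ in $W=N_K(\g{t})/Z_K(\g{t})$ acts exactly as $\Ad(g)|_\g{t}$, so every element of $W$ arises as the restriction to $\g{t}$ of some $\Phi_A\in\Aut(\g{k},\cal{F})$; hence $W\subset\Aut_\cal{F}(\Delta_\g{k},\Delta_V)\subset\Aut_\cal{F}^\pm(\Delta_\g{k},\Delta_V)$.

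The reverse implication of the statement is then immediate from $\Out_\cal{F}^\pm(\Delta_\g{k},\Delta_V)\subset\Aut_\cal{F}^\pm(\Delta_\g{k},\Delta_V)$ and Fact~\ref{fact:aut_Jt}. For the forward implication, given $T_1,T_2\in\cal{J}\cap\bar{C}$ with $T_1\sim T_2$, Fact~\ref{fact:aut_Jt} supplies some $\varphi\in\Aut_\cal{F}^\pm(\Delta_\g{k},\Delta_V)$ with $\varphi T_1=T_2$. Since $\varphi$ permutes the roots in $\Delta_\g{k}$, it maps Weyl chambers to Weyl chambers, so I would pick $w\in W$ with $w\varphi(\bar{C})=\bar{C}$ and set $\varphi':=w\circ\varphi$. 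Then $\varphi'\in\Aut_\cal{F}^\pm(\Delta_\g{k},\Delta_V)$ (since $W$ lies in this group) and $\varphi'(\bar{C})=\bar{C}$, whence $\varphi'\in\Out_\cal{F}^\pm(\Delta_\g{k},\Delta_V)$. It remains to verify $\varphi' T_1=T_2$, which amounts to $wT_2=T_2$: both $T_2$ and $wT_2=\varphi'T_1\in\varphi'(\bar{C})=\bar{C}$ lie in $\bar{C}$, and $\bar{C}$ is a strict fundamental domain for the $W$-action on $\g{t}$, so $wT_2=T_2$ as required.

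One small technicality worth flagging, although it is not a real obstacle, concerns the central direction: $\bar{C}$ is the orthogonal product of $Z(\g{k})$ and a closed Weyl chamber of $\g{t}\cap[\g{k},\g{k}]$. Any $\varphi\in\Aut_\cal{F}^\pm(\Delta_\g{k},\Delta_V)$ preserves the decomposition $\g{t}=Z(\g{k})\oplus(\g{t}\cap[\g{k},\g{k}])$, because $Z(\g{k}^\C)=\bigcap_{\alpha\in\Delta_\g{k}}\ker\alpha$ and $\varphi$ permutes $\Delta_\g{k}$; and $W$ acts trivially on $Z(\g{k})$. Thus the chamber-translation argument genuinely takes place on $\g{t}\cap[\g{k},\g{k}]$ and extends identically over $Z(\g{k})$, so no subtlety enters on the central component. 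The heart of the proof is therefore a two-line chamber-translation argument; the only point that deserves to be written out carefully is that $W$ really does sit inside $\Aut_\cal{F}^\pm(\Delta_\g{k},\Delta_V)$, which is precisely what the first paragraph addresses.
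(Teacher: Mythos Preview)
Your proof is correct. The paper does not supply an explicit proof of this fact; it simply records it as ``analogous to Fact~\ref{fact:aut_Jt}'', and your chamber-translation argument via the inclusion $W\subset\Aut_\cal{F}^\pm(\Delta_\g{k},\Delta_V)$ is precisely the standard way to pass from Fact~\ref{fact:aut_Jt} to Fact~\ref{fact:J_Out}.
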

In~\cite{Do:tams}, the study of the action of $\Out_\cal{F}^\pm(\Delta_\g{k},\Delta_V)$ on $\cal{J}\cap\bar{C}$ was greatly facilitated by the introduction of a graph called lowest weight diagram. This graph generalizes the notion of extended Vogan diagram (which corresponds to the particular case when $\cal{F}$ is a homogeneous polar foliation induced by the isotropy representation $\rho$ of an inner symmetric space).

The \emph{lowest weight diagram} of $\cal{F}$ (or of $\rho_*^\C$) is constructed as follows. Start with the Dynkin diagram of $\Delta_\g{k}$, where each simple root in $\Delta_\g{k}$ is represented by a white node. Draw as many black nodes as there are lowest weights of $\rho_*^\C$, indicating the multiplicity. (We recall that a weight $\lambda\in\Delta_V$ is a lowest weight if $\lambda -\sum_{i=1}^r n_i \alpha_i\notin\Delta_V$, with $n_i\in\mathbb{Z}_{\geq 0}$, unless all $n_i$ vanish.)
Join each black node corresponding to a lowest weight $\lambda$ to the white nodes associated with simple roots $\alpha$ with  $\langle \alpha,\lambda\rangle\neq 0$ by means of a simple edge.
Finally, attach to each one of these new edges the integer value $2\langle\alpha,\lambda\rangle/\langle\alpha,\alpha\rangle$ as a label; if no label is attached, we understand that the associated integer is $-1$. The inner product on $\g{t}^*$ is induced from the $\Aut(\g{k},\cal{F})$-invariant one in $\g{k}$ in the natural way: $\langle\lambda,\mu\rangle=\langle H_\lambda,H_\mu\rangle$, for any $\lambda,\mu\in\g{t}^*$. An automorphism of a lowest weight diagram is a permutation of
its nodes preserving the black (and hence, white) nodes, the multiplicities of the associated weights, the edges, and the labels  of the edges between black nodes and white nodes.

\begin{fact}\label{fact:Out_lwd}
	Each element of $\Out_\cal{F}^\pm(\Delta_\g{k},\Delta_V)\subset \Out(\Delta_\g{k},\Delta_V)$ is an orthogonal linear transformation of $\g{t}$ that preserves the set of coroots and the set of coweights of $\rho_*^\C$. Therefore, it induces an automorphism of the lowest weight diagram of $\cal{F}$ in a natural way.
\end{fact}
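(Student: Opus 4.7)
The plan is to establish the stated inclusion $\Out_\cal{F}^\pm \subset \Out$ by verifying the three properties (orthogonality on $\g{t}$, preservation of coroots, preservation of coweights), and then read off the lowest weight diagram automorphism as a consequence of those three properties together with $\bar C$-preservation. I would pick an arbitrary element $\psi$ of $\Aut_\cal{F}^\pm$ preserving $\bar C$; by definition this is either the restriction $\psi = \varphi_A|_\g{t}$ of some $\Phi_A \in \Aut(\g{k}, \cal{F})$ with $\varphi_A(\g{t}) = \g{t}$, or $-\id_\g{t}$ composed with such a restriction.

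For orthogonality, note that the chosen inner product on $\g{k}$ is $\Aut(\g{k}, \cal{F})$-invariant by construction, so $\varphi_A$ is orthogonal; its restriction to the invariant subspace $\g{t}$ remains orthogonal, and composition with $-\id_\g{t}$ preserves this property. For preservation of coroots, I would recall that Lie algebra automorphisms of $\g{k}^\C$ preserving $\g{t}^\C$ permute the roots in $\Delta_\g{k}$, and orthogonality converts this into the identity $\psi(H_\alpha) = H_{\alpha \circ \psi^{-1}}$, showing that $\psi$ permutes the coroot set; the element $-\id_\g{t}$ does so trivially since $\Delta_\g{k} = -\Delta_\g{k}$. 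For preservation of coweights, the essential input is the intertwining relation $A\rho_*(T)A^{-1} = \rho_*(\psi(T))$: applied to a weight vector $v \in V_\lambda$, a direct calculation gives $\rho_*(S)(Av) = i\lambda(\psi^{-1}(S))(Av)$ for all $S \in \g{t}$, hence $A$ is an isomorphism $V_\lambda \to V_{\lambda\circ\psi^{-1}}$. It follows that $\psi$ permutes $\Delta_V$ preserving multiplicities, and by the same adjoint manipulation as for coroots, it permutes the coweight set $\{H_\lambda : \lambda \in \Delta_V\}$. That $-\id_\g{t}$ does the same uses that $\rho_*^\C$ is of real type, hence self-dual, which forces $\Delta_V = -\Delta_V$.

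For the induced automorphism of the lowest weight diagram, I would note that any $\psi \in \Out_\cal{F}^\pm$ preserves $\bar C$ and is a root-system automorphism, so it permutes the walls of $\bar C$ and hence the simple roots $\Pi_\g{k}$; this gives a permutation of the white nodes preserving the Dynkin-diagram edges (both by root-system functoriality and by orthogonality of $\psi$). The permutation of $\Delta_V$ together with the permutation of $\Pi_\g{k}$ preserves the dominance partial order on weights, so lowest weights go to lowest weights; combined with the multiplicity preservation derived from the intertwining argument, this yields a permutation of the black nodes respecting multiplicities. Finally, orthogonality of $\psi$ makes the pairings $\langle\alpha,\lambda\rangle$ and $\langle\alpha,\alpha\rangle$ invariant under the simultaneous replacement $\alpha \mapsto \psi(\alpha)$, $\lambda \mapsto \psi(\lambda)$, so the edge labels $2\langle\alpha,\lambda\rangle/\langle\alpha,\alpha\rangle$ are preserved, completing the verification.

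I expect the main obstacle to be essentially bookkeeping: keeping the direction of the $\psi$-versus-dual-$\psi$ actions straight when shuttling between weights in $\g{t}^*$ and coweights in $\g{t}$, and handling the $-\id_\g{t}$ case uniformly — the latter being precisely where the real-type assumption on $\rho_*^\C$ (and hence self-duality) enters in a nontrivial way, since without it $-\id_\g{t}$ would in general fail to preserve the coweight set.
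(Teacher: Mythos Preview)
Your proposal is correct and follows precisely the reasoning the paper itself uses. The paper does not give a separate proof of this Fact; instead, the justification is embedded in the paragraph immediately preceding it, where it is observed that restrictions to $\g{t}$ of elements of $\Aut(\g{k},\cal{F})$ lie in $\Aut(\Delta_\g{k},\Delta_V)$ (preserving coroots and coweights), and that $-\id_\g{t}$ preserves coweights because $\rho_*^\C$ is of real type and hence self-dual. Your write-up is a careful expansion of exactly these points, including the intertwining argument $A\rho_*(T)A^{-1}=\rho_*(\varphi_A(T))$ for the weight permutation and the $\bar C$-preservation for the simple-root permutation, so there is nothing to add.
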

Assume that one could prove that, conversely, each automorphism of the lowest weight diagram of $\cal{F}$ is induced by an element of $\Out_\cal{F}^\pm(\Delta_\g{k},\Delta_V)$. Then, by Fact~\ref{fact:J_Out},
the investigation of $\cal{J}\cap\bar{C}/\!\sim$ can be simplified by analyzing the symmetries of the lowest weight diagram and their induced action on $\cal{J}\cap\bar{C}$. This was precisely the approach followed in~\cite{Do:tams}, both for homogeneous polar foliations and FKM foliations with $m_1\leq m_2$.

\subsection{Analysis of the projections of $\cal{F}_{(8,7)}$ and $\cal{F}_{\underline{(8,7)}}$}\label{subsec:8,7}
In this subsection we will apply the theory explained in~\S\ref{subsec:approach} to the FKM foliations $\cal{F}_{(8,7)}$ and $\cal{F}_{\underline{(8,7)}}$. However, we start with the following known remark, which implies that, for these foliations, the numbers $N_\C$ and $N_\H$ are positive.

\begin{remark}\label{rem:quat_str_P}
	Any FKM foliation $\cal{F}$ satisfies $N_\C(\cal{F})\geq 1$; and also $N_\H(\cal{F})\geq 1$ provided that $m\geq 2$.  Indeed, using the notation in~Section~\ref{sec:aut}, since $m\geq 1$, then $P_0P_1$ is always a complex structure preserving $\cal{F}$; see~\cite[Theorem~6.1]{Do:tams}. Similarly, any FKM foliation $\cal{F}$ with $m\geq 2$ is preserved at least by the quaternionic structure $\g{q}=\spann\{P_0P_1, P_1P_2,P_0P_2\}$.
\end{remark}

In order to apply the theory in~\S\ref{subsec:approach} to $\cal{F}_{(8,7)}$ and $\cal{F}_{\underline{(8,7)}}$, we present below some information about the weights of the associated representations $\rho_*^\C$ and, especially, the proof of a converse to Fact~\ref{fact:Out_lwd}.

Let $V=\R^{32}$, and consider an FKM foliation  $\cal{F}\in\{\cal{F}_{(8,7)},\cal{F}_{\underline{(8,7)}}\}$ on $\sph(V)=\sph^{31}$. In this case, since the singular leaves of $\cal{F}$ have different dimensions, it follows that $\Aut(\cal{F})$ leaves each leaf of $\cal{F}$ invariant.
Hence, the maximal connected subgroup $\rho(K)$ of $\SO{V}$ that leaves each leaf of $\cal{F}$ invariant agrees with $\Aut^0(\cal{F})$, which, by Proposition~\ref{prop:2cases}, is in turn isomorphic to $\Spin{9}\cdot H$, where $H=\SO{2}$ or $H$ is trivial, depending on whether $\cal{F}=\cal{F}_{(8,7)}$ or  $\cal{F}=\cal{F}_{\underline{(8,7)}}$, respectively. Indeed, the representation $\rho\colon K\to \OG{V}$ introduced in~\S\ref{subsec:approach} is given by the tensor product $\R^{16}\otimes\R^2$ of the irreducible $\Spin{9}$-representation on $\R^{16}$ and the standard representation of $\SO{2}$ on $\R^2$ (if $H=\SO{2}$), and by two copies $\R^{16}\oplus\R^{16}$ of the irreducible $\Spin{9}$-representation on $\R^{16}$ (if $H$ is trivial).

We need to introduce some notation for the roots of $\g{k}$ and the weights of $\rho_*^\C$. We mostly follow the notation of~\cite[\S6]{Do:tams}. The maximal abelian subalgebra $\g{t}$ of $\g{k}$ splits as $\g{t}=\g{t}_s\oplus\g{t}_\g{h}$, where $\g{t}_s$ is a maximal abelian subalgebra of the Lie algebra $\g{so}(9)$ of $\Spin{9}$, and $\g{t}_\g{h}=\g{h}$ is one-dimensional or trivial, depending on the foliation. On the one hand, let $\{\alpha_1^s,\alpha_2^s,\alpha_3^s,\alpha_4^s\}$ be a system of simple roots for $\g{so}(9)$. It is well-known that there is an orthonormal basis $\{\omega_1^s,\omega_2^s,\omega_3^s,\omega_4^s\}$ of $\g{t}_s^*$ such that \begin{equation}\label{eq:alpha^s}
	\alpha_i^s=\omega_i^s-\omega_{i+1}^s\quad \text{for } i=1,2,3, \qquad \text{and}\qquad \alpha_4^s=\omega_4^s.
\end{equation}
The weights of the spin representation $\rho_s$ of $\g{so}(9)$ on $\C^{16}$ are $\frac{1}{2}(\pm\omega_1^s\pm\omega_2^s\pm\omega_3^s\pm\omega_4^s)$. On the other hand, if $\g{h}=\g{so}(2)$, the weights of the standard representation of $\g{so}(2)$ on $\C^2$ can be written as $\pm\omega$, for some non-zero $\omega\in\g{t}_\g{h}^*$. Therefore, the weights of $\rho_*^\C$ are:
\begin{equation}
\begin{aligned}\label{eq:weights}
 & \frac{1}{2}(\pm\omega_1^s\pm\omega_2^s\pm\omega_3^s\pm\omega_4^s)\pm\omega, &\text{if }&\cal{F}=\cal{F}_{(8,7)}\text{ and }\g{h}=\g{so}(2),
 \\
 &  \frac{1}{2}(\pm\omega_1^s\pm\omega_2^s\pm\omega_3^s\pm\omega_4^s), \text{ with multiplicity } 2, &\text{if }& \cal{F}= \cal{F}_{\underline{(8,7)}}\text{ and } \g{h}=0.
\end{aligned}
\end{equation}
In particular, $\rho_*^\C$ has two lowest weights  in the first case, namely  $\lambda^\pm=-\frac{1}{2}(\omega_1^s+\omega_2^s+\omega_3^s+\omega_4^s)\pm\omega$, and only one lowest weight (with multiplicity $2$)  in the second case, namely $\lambda=-\frac{1}{2}(\omega_1^s+\omega_2^s+\omega_3^s+\omega_4^s)$. Hence, the respective lowest weight diagrams adopt the following forms:

\begin{center}
\begin{tabular}{c@{\qquad \qquad}c}
\begin{tikzpicture}[node distance=1,g/.style={circle,inner sep=2.2,draw},lw/.style={circle,inner sep=2.2,fill=black, draw}]
	\node[g] (a1s)[label=below:$\alpha^s_1$] {};
	\node[g] (a2s) [right=of a1s, label=below:$\alpha^s_2$] {}
	edge [] (a1s);
	\node[g] (ap1) [right=of a2s, label=below:$\alpha^s_{3}$] {}
	edge [] (a2s);
	\node[g] (ap) [right=of ap1, label=below:$\alpha^s_4$] {};
	\draw ($(ap1)!.65!(ap)$) -- ($(ap1)!.65!(ap)+(-0.3,0.2)$);
	\draw ($(ap1)!.65!(ap)$) -- ($(ap1)!.65!(ap)+(-0.3,-0.2)$);
	\draw [] (ap1.north east) to (ap.north west);
	\draw [] (ap1.south east) to (ap.south west);
	%right hand side branch:
	\node[lw] (lw+) at ($(ap)+(30:1.2)$) [label=right:$\lambda^+$] {}
	edge [] (ap);
	\node[lw] (lw-) at ($(ap)+(-30:1.2)$) [label=right:$\lambda^-$] {}
	edge [] (ap);
\end{tikzpicture}
&
\begin{tikzpicture}[node distance=1,g/.style={circle,inner sep=2.2,draw},lw/.style={circle,inner sep=2.2,fill=black, draw}]
	\node[g] (a1s)[label=below:$\alpha^s_1$] {};
	\node[g] (a2s) [right=of a1s, label=below:$\alpha^s_2$] {}
	edge [] (a1s);
	\node[g] (ap1) [right=of a2s, label=below:$\alpha^s_{3}$] {}
	edge [] (a2s);
	\node[g] (ap) [right=of ap1, label=below:$\alpha^s_4$] {};
	\node[lw] (la) [right=of ap, label=below:$\lambda$,label=above:$(2)$] {}
	edge [] (ap);
	\draw ($(ap1)!.65!(ap)$) -- ($(ap1)!.65!(ap)+(-0.3,0.2)$);
	\draw ($(ap1)!.65!(ap)$) -- ($(ap1)!.65!(ap)+(-0.3,-0.2)$);
	\draw [] (ap1.north east) to (ap.north west);
	\draw [] (ap1.south east) to (ap.south west);
\end{tikzpicture}
\end{tabular}
\end{center}

The following result extends \cite[Theorem~6.3]{Do:tams} (which was only valid for FKM foliations with $m_1\leq m_2$) to the FKM foliations under consideration in this section.
\begin{proposition}\label{prop:out}
Consider an FKM foliation $\cal{F}\in\{\cal{F}_{(8,7)},\cal{F}_{\underline{(8,7)}}\}$ on $\sph(V)=\sph^{31}$. Then $\Out_\cal{F}^\pm(\Delta_\g{k},\Delta_V)$ is isomorphic to the group of automorphisms of the lowest weight diagram of $\cal{F}$.
The correspondence is the natural one given in Fact~\ref{fact:Out_lwd}.
\end{proposition}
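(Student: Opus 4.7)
\medskip

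\textbf{Plan.} By Fact~\ref{fact:Out_lwd}, restriction to the node set yields a homomorphism
\[
\Phi\colon\Out^\pm_\cal{F}(\Delta_\g{k},\Delta_V)\longrightarrow \text{Aut}(\text{lowest weight diagram of }\cal{F}).
\]
The strategy is to verify that $\Phi$ is both injective and surjective. For injectivity, I note that any $\varphi\in\Out_\cal{F}^\pm(\Delta_\g{k},\Delta_V)$ is a linear map on $\g{t}$ recorded on the diagram by its action on the simple roots $\alpha_1^s,\dots,\alpha_4^s$ and on the lowest weights. In the case $\cal{F}=\cal{F}_{\underline{(8,7)}}$, we have $\g{t}=\g{t}_s$ and the simple roots already span $\g{t}^*$. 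In the case $\cal{F}=\cal{F}_{(8,7)}$, the simple roots span $\g{t}_s^*$, while each lowest weight $\lambda^\pm$ has nonzero $\g{t}_\g{h}^*$-component $\pm\omega$; together they span $\g{t}^*$. In both cases, $\Phi(\varphi)=\id$ forces $\varphi=\id$ by duality.

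\medskip

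For surjectivity, I first compute the automorphism group of each lowest weight diagram directly. The underlying Dynkin diagram $B_4$ has no nontrivial diagram automorphism, so any diagram automorphism must fix each white node. For $\cal{F}_{\underline{(8,7)}}$, the diagram has a single black node and hence the automorphism group is trivial; surjectivity is vacuous, and the proposition follows in this case. For $\cal{F}_{(8,7)}$, the two black nodes $\lambda^\pm$ are interchangeable (same multiplicity, same edge and same label to $\alpha_4^s$), and no other new symmetry arises, so the automorphism group is $\mathbb{Z}/2$ generated by the swap $\lambda^+\leftrightarrow\lambda^-$.

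\medskip

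It remains to realize the swap $\lambda^+\leftrightarrow\lambda^-$ by an element of $\Out_\cal{F}^\pm$. Here I invoke Remark~\ref{rem:O(2)}, which gives $\Spin{9}\cdot\OG{2}\subset\Aut(\cal{F}_{(8,7)})$. Choose any $A\in\OG{2}\setminus\SO{2}$. Conjugation by $A$ commutes with $\Spin{9}$ and inverts $\SO{2}$, so the Lie algebra automorphism $\varphi_A$ of $\g{k}=\g{spin}(9)\oplus\g{so}(2)$ preserves $\g{t}=\g{t}_s\oplus\g{t}_\g{h}$, acting as the identity on $\g{t}_s$ and as $-\id$ on $\g{t}_\g{h}$. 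Since $\bar{C}$ is the orthogonal product of $\g{t}_\g{h}=Z(\g{k})$ and a Weyl chamber in $\g{t}_s$, $\varphi_A|_\g{t}$ preserves $\bar{C}$; hence $\varphi_A|_\g{t}\in\Out_\cal{F}^\pm(\Delta_\g{k},\Delta_V)$. Dually it fixes all simple roots and sends $\omega\mapsto -\omega$, exchanging $\lambda^+=-\tfrac12\sum_i\omega_i^s+\omega$ with $\lambda^-=-\tfrac12\sum_i\omega_i^s-\omega$, which is exactly the nontrivial diagram automorphism.

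\medskip

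\textbf{Main obstacle.} The only real content lies in the $\cal{F}_{(8,7)}$ case: one must realize a transformation acting as $-\id$ on the $1$-dimensional center $\g{t}_\g{h}$, and neither $\Spin{9}\cdot\SO{2}=\Aut^0(\cal{F}_{(8,7)})$ nor the Weyl group of $\g{spin}(9)$ can produce this on its own. That is precisely why Remark~\ref{rem:O(2)} is needed: the disconnected factor $\OG{2}$ in $\Aut(\cal{F}_{(8,7)})$ supplies the missing reflection.
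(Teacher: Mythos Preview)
Your proof is correct and follows essentially the same approach as the paper: both reduce to surjectivity (since the $B_4$ diagram has no symmetries, the only nontrivial case is $\cal{F}_{(8,7)}$), and both realize the swap $\lambda^+\leftrightarrow\lambda^-$ by picking an element of $\OG{2}\setminus\SO{2}$ inside $\Aut(\cal{F}_{(8,7)})$ via Remark~\ref{rem:O(2)}, which acts as $-\id$ on $\g{t}_\g{h}$ and trivially on $\g{t}_s$. Your explicit injectivity argument (the simple roots together with the lowest weights span $\g{t}^*$) is a small addition the paper leaves implicit in the phrase ``natural correspondence,'' but otherwise the arguments coincide.
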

\begin{proof}
In view of Fact~\ref{fact:Out_lwd}, we just have to show that each automorphism of the lowest weight diagram is induced by an element of $\Out_\cal{F}^\pm(\Delta_\g{k},\Delta_V)$. If $\cal{F}=\cal{F}_{\underline{(8,7)}}$, the diagram does not have any non-trivial automorphism, so there is nothing to prove. Hence, assume $\cal{F}=\cal{F}_{(8,7)}$. The only non-trivial automorphism of the diagram interchanges the nodes corresponding to the lowest weights $\lambda^+$ and $\lambda^-$, and fixes the remaining nodes (corresponding to the simple roots). If we denote by $\{e_1^s,e_2^s,e_3^s,e_4^s,e\}\subset\g{t}$ the dual basis of $\{\omega_1^s,\omega_2^s,\omega_3^s,\omega_4^s,\omega\}\subset\g{t}^*$, then the lowest coweights are $H_{\lambda^\pm}=-\frac{1}{2}
(e_1^s+e_2^s+e_3^s+e_4^s)\pm e$. Hence, the linear transformation $\varphi$ of $\g{t}=\g{t}_s\oplus\g{t}_\g{h}$ determined by the only non-trivial automorphism of the diagram is given by $\varphi\vert_{\g{t}_s}=\id_{\g{t}_s}$ and $\varphi\vert_{\g{t}_\g{h}}=-\id_{\g{t}_\g{h}}$. By Remark~\ref{rem:O(2)}, the matrix $\diag(-1,1)\in\OG{2}\subset \Aut(\cal{F}_{(8,7)})$ can be identified with an automorphism $A$ of $\cal{F}_{(8,7)}$. By the discussion just above Fact~\ref{fact:aut_k,F}, this automorphism determines an element $\Phi_A\in\Aut(\g{k},\cal{F})$ with $\Phi_A\vert_\g{k}=\Ad(A)$ and $\Phi_A\vert_V=A$. In particular, $\Phi_A\vert_{\g{t}_s}=\id_{\g{t}_s}$ and $\Phi_A\vert_{\g{t}_\g{h}}=-\id_{\g{t}_\g{h}}$. This implies, on the one hand,  $\Phi_A\vert_\g{t}=\varphi$, and on the other hand,   $\Phi_A\vert_\g{t}\in\Out_{\cal{F}}^\pm(\Delta_\g{k},\Delta_V)$.
\end{proof}

Now we have all the ingredients to complete the proof of Theorem~\ref{th:87}. According to the discussion in~\S\ref{subsec:approach} (see especially \eqref{eq:N_CH} and Fact~\ref{fact:equivalences}), we just have to calculate $N_\C(\cal{F})=|\cal{J}/\!\sim\!|=|\cal{J}\cap\bar{C}/\!\sim\!|$ and $N_\H(\cal{F})=|\cal{S}/\!\sim\!|=|\cal{S}_{\bar{C}}/\!\sim\!|$ for $\cal{F}\in\{\cal{F}_{(8,7)},\cal{F}_{\underline{(8,7)}}\}$. But we will actually compute explicit representatives of the moduli spaces $\cal{J}/\!\sim$ and $\cal{S}/\!\sim$.

\begin{proof}[Proof of Theorem~\ref{th:87}]
We will first consider the complex case. Denote by $\{e_1^s,e_2^s,e_3^s,e_4^s\}\subset\g{t}_s$ the dual basis of $\{\omega_1^s,\omega_2^s,\omega_3^s,\omega_4^s\}$, and, in the case $\cal{F}=\cal{F}_{(8,7)}$, let $e\in\g{t}_\g{h}$ be such that $\omega(e)=1$. An arbitrary element $T\in\g{t}$ can be written as $T=\sum_{i=1}^4x_i^s e_i^s + x e$, with $x_i^s,x\in\R$ (where the last addend can only appear if $\cal{F}=\cal{F}_{(8,7)}$, so we assume $x=0$ if $\cal{F}=\cal{F}_{\underline{(8,7)}}$). Then $T\in\bar{C}$ if and only if $\alpha_i^s(T)\geq 0$ for $i=1,2,3,4$, which by~\eqref{eq:alpha^s} is equivalent to $x_1^s\geq x_2^s\geq x_3^s\geq x_4^s\geq 0$; and by Fact~\ref{fact:pm1} and~\eqref{eq:weights}, $T\in\cal{J}$ if and only if $\frac{1}{2}(\pm x_1^s\pm x_2^s\pm x_3^s\pm x_4^s)\pm x\in\{\pm 1\}$ for all combinations of signs. Therefore,
\[
\cal{J}\cap\bar{C}=\{2e_1^s,\pm e\} \quad\text{ if } \cal{F}=\cal{F}_{(8,7)},\qquad \text{and}\qquad \cal{J}\cap\bar{C}=\{2e_1^s\} \quad\text{ if } \cal{F}=\cal{F}_{\underline{(8,7)}}.
\]
But in the first case, $\Out_{\cal{F}}^\pm(\Delta_\g{k},\Delta_V)=\{\id_\g{t},\varphi\}$, where $\varphi=\Phi_A$ is the transformation given in the proof of Proposition~\ref{prop:out}, which fixes $\g{t}_s$ pointwisely and interchanges $e$ and $-e$. Thus, thanks to Fact~\ref{fact:Out_lwd}, we conclude:
\[
\cal{J}\cap\bar{C}/\!\sim=\{[2e_1^s],[e]\} \quad\text{ if } \cal{F}=\cal{F}_{(8,7)},\qquad \text{and}\qquad \cal{J}\cap\bar{C}=\{[2e_1^s]\} \quad\text{ if } \cal{F}=\cal{F}_{\underline{(8,7)}},
\]
and hence $N_\C(\cal{F}_{(8,7)})=2$ and $N_\C(\cal{F}_{\underline{(8,7)}})=1$. This proves the claims of Theorem~\ref{th:87}  in the complex case.

Now, we have to deal with the quaternionic case. By Facts~\ref{fact:su_2_vs_complex} and~\ref{fact:injective}, in order to determine $\cal{S}_{\bar{C}} /\!\sim$ we have to decide which classes $[T]\in\cal{J}\cap\bar{C}/\sim$ have representatives that belong to $\g{su}_2$-subalgebras of $\g{k}\cong\g{so}(9)\oplus\g{h}$. On the one hand, $e$ does not belong to any $\g{su}(2)$-subalgebra of $\g{k}$, since $e$ spans the center of~$\g{k}$. 
On the other hand, the element $2e_1^s$ belongs to a subalgebra of~$\g{so}(9)$ isomorphic to $\g{so}(3)\cong\g{su}(2)$. Furthermore, it follows from Remark~\ref{rem:quat_str_P} that there exists at least
one quaternionic structure preserving~$\cal{F}$, which must therefore be equivalent to the one given by~$2e_1^s$. Thus $\cal{S}_{\bar{C}} /\!\sim$ consists of one element in both cases, namely the class of any $\g{su}(2)$-subalgebra of $\g{so}(9)$ containing $2e_1^s$. Therefore, $N_\H(\cal{F}_{(8,7)})=N_\H(\cal{F}_{\underline{(8,7)}})=1$.

Finally, the claim about the inhomogeneity of the projected foliations is a consequence of the inhomogeneity of $\cal{F}_{(8,7)}$ and $\cal{F}_{\underline{(8,7)}}$, cf.~\cite[Remark~7.1]{Do:tams} and~\cite[\S6]{DG:tohoku}.
\end{proof}

We will now prove Corollaries~\ref{cor:15,7} and~\ref{cor:codim1}. We start with the more general Corollary~\ref{cor:codim1}.

\begin{proof}[Proof of Corollary~\ref{cor:codim1}]	
	Let $\cal{G}$ be a codimension one polar foliation of $\K\P^n$. Then, there exists a codimension one polar foliation $\cal{F}$ of $\sph^{{a}(n+1)-1}$, ${a}=\dim_\R\K$, and a complex or quaternionic structure, $J$ or $\g{q}$, on $\R^{{a}(n+1)}$ preserving $\cal{F}$ such that $\cal{G}=\pi(\cal{F})$, where $\pi\colon \sph^{{a}(n+1)-1}\to\K\P^n$ is the Hopf fibration associated with $J$ or $\g{q}$.
	
	Assume first that $\cal{F}$ is reducible. This is equivalent to the fact that the regular leaves of $\cal{F}$ have $g\in\{1,2\}$ principal curvatures, and also equivalent to the extension $\hat{\cal{F}}$ of $\cal{F}$ via homotheties to $\R^{{a}(n+1)}$ being a product of two polar foliations of codimension one on lower dimensional Euclidean spaces, $\hat{\cal{F}}=\hat{\cal{F}}_1\times \hat{\cal{F}}_2$. Each $\hat{\cal{F}}_i$, $i=1,2$, is nothing but a foliation by concentric spheres centered at the origin, and hence its restriction $\cal{F}_i$ to the unit sphere has exactly one leaf, and thus $N_\K(\cal{F}_i)=1$ trivially. Then it follows from~\cite[Proposition~4.1]{Do:tams} and~\cite[Proposition~3.9]{DG:tohoku} that $N_\K(\cal{F})=1$, and hence  $\cal{G}=\pi(\cal{F})$ is congruent to one of the homogeneous reducible polar foliations $\cal{G}^\mathrm{red}_{\K\P^k,\K\P^{n-k-1}}$, $k\in\{0,\dots,n-1\}$.
	
	Now assume that $\cal{F}$ is irreducible. Then, the regular leaves of $\cal{F}$ have $g\in\{3,4,6\}$ principal curvatures. The case $g=3$ is impossible, since such foliations are induced by isotropy representations of non-inner symmetric spaces (see~\cite[end of~\S4.3]{Chi}), and then there do not exist complex (and hence quaternionic) structures preserving such foliations, according to~\cite[Theorem~5.1]{Do:tams}. If $g=6$, by the theory of isoparametric hypersurfaces in spheres, there are only two possible multiplicity pairs, namely $(m_1,m_2)\in\{(1,1),(2,2)\}$. The only isoparametric foliation with $g=6$ and multiplicities $(1,1)$ is homogeneous and induced by the isotropy representation of $\G/\SO{4}$ (this corresponds to the last row of Table~\ref{table:hom_c1}), whereas no isoparametric foliation with $g=6$ and multiplicities $(2,2)$ can be preserved by a complex (and hence, quaternionic) structure, as shown in~\cite[Proposition~2.3]{Do:tams}.
	
	We are left with the case $g=4$. By the classification results of isoparametric hypersurfaces with $g=4$ (see~\cite{Chi:jdg}, \cite{Chi}), we know that $\cal{F}$ is either an inhomogeneous FKM foliation or a homogeneous foliation induced by the isotropy representation of a rank two irreducible symmetric space. Up to congruence, the inhomogeneous FKM foliations satisfy the inequality $m_1\leq m_2$ (according to the notation in Section~\ref{sec:aut}), except the two foliations $\cal{F}_{(8,7)}$ and $\cal{F}_{\underline{(8,7)}}$ with $(m_1,m_2)=(8,7)$. The values of $N_\K$ specified in Table~\ref{table:FKM} have been extracted from~\cite[Main Theorem~2]{Do:tams}  and \cite[Theorem~1.2]{DG:tohoku} for the cases with $m_1\leq m_2$, and from Theorem~\ref{th:87} for $(m_1,m_2)=(8,7)$. Finally, Table~\ref{table:hom_c1} specifies the homogeneous foliations $\cal{F}$ with $N_\C(\cal{F})\geq 1$ in terms of the associated rank two irreducible symmetric spaces $G/K$. The values of $N_\C(\cal{F})$ and $N_\H(\cal{F})$ have been transcribed from~\cite[Main Theorem~2, or~\S5.3]{Do:tams} and~\cite[Theorem~1.1]{DG:tohoku}. From these results, the numbers $N_\C^\mathrm{h}(\cal{F})$ and $N_\H^\mathrm{h}(\cal{F})$ of homogeneous examples obtained as projections of $\cal{F}$ can also be easily derived, using the following rule:  $N_\C^\mathrm{h}(\cal{F})=1$ if and only if the associated $G/K$ is Hermitian, and $N_\C^\mathrm{h}(\cal{F})=0$ otherwise; and $N_\H^\mathrm{h}(\cal{F})=1$ if and only if the associated $G/K$ is quaternionic-K\"ahler, and $N_\H^\mathrm{h}(\cal{F})=0$ otherwise. Note that some of these homogeneous foliations $\cal{F}$ are also of FKM type. This information included in Table~\ref{table:hom_c1} can be derived from~\cite[\S4.4]{FKM:mathz} along with~\cite[end of~\S4.3]{Chi}.
\end{proof}

\begin{proof}[Proof of Corollary~\ref{cor:15,7}]
	This result can be obtained directly from Corollary~\ref{cor:codim1} by inspecting which foliations $\cal{F}$ in Tables~\ref{table:FKM} and~\ref{table:hom_c1} live in a $31$-dimensional sphere. It turns out that all such foliations are of FKM type.  Therefore, Table~\ref{table:15,7} is obtained by listing all (congruence classes of) FKM foliations $\cal{F}$ with multiplicities $(m_1,m_2)$ with $2m_1+2m_2+1=31$ (which can be done by consulting~\cite[Proposition~5.1]{Chi} or~\cite[\S4.3]{FKM:mathz}),  informing their homogeneity, and specifying the values $N_\K(\cal{F})$ and $N_\K^\mathrm{h}(\cal{F})$ with the help of Tables~\ref{table:FKM} and~\ref{table:hom_c1}.
\end{proof}

\end{document}